\definecolor{gray}{gray}{0.5}
\definecolor{green}{rgb}{0,0.5,0}
\definecolor{lightgreen}{rgb}{0,0.7,0}
\definecolor{purple}{rgb}{0.5,0,0.5}
\definecolor{orange}{rgb}{1,0.65,0}
\definecolor{darkred}{rgb}{0.5,0,0}
\tiny\setstretch{1},
\theoremstyle{plain}
\newtheorem{theorem}{Theorem}
\newtheorem{proposition}[theorem]{Proposition}
\newtheorem{corollary}[theorem]{Corollary}
\theoremstyle{definition}
\newtheorem{definition}[theorem]{Definition}
\theoremstyle{remark}
\newtheorem{remark}[theorem]{Remark}
\def\N{\ensuremath {{\bf N}}}
\newcommand{\kMax}{\mathop{\rm kMax }}
\newcommand{\nMax}{\mathop{\rm nMax }}
\newcommand{\cMax}{\mathop{\rm cMax }}
\newcommand{\Compl}{\mathop{\rm Compl }}
\newcommand{\newH}{\mathop{\rm newH }}
\newcommand{\step}{\mathop{\rm step }}
\newcommand{\Heads}{\mathop{\rm Heads }}
\newcommand{\Shift}{\mathop{\rm Shift }}
\newcommand{\CalculateRunningBlock}{\mathop{\rm CalculateRunningBlock }}
\newcommand{\shift}{\mathop{\rm shift }}
\newcommand{\shiftA}{\mathop{\rm shiftA }}
\newcommand{\shiftB}{\mathop{\rm shiftB }}
\newcommand{\Block}{\mathop{\rm Block }}
\newcommand{\BlockA}{\mathop{\rm BlockA }}
\newcommand{\BlockB}{\mathop{\rm BlockB }}
\newcommand{\kMaxfor}{\mathop{\rm kMaxfor }}
\newcommand{\Products}{\mathop{\rm Products }}
\newcommand{\complexity}{\mathop{\rm complexity}}
\newfont{\cmbsy}{cmbsy10}
\newcommand{\Orden}{\mathop{\hbox{\cmbsy O}}\nolimits}
\title{Algorithms for determining integer complexity}
\author{J. Arias de Reyna}
\address{Facultad de Matem\'aticas, 
Universidad de Sevilla, \newline
Apdo.~1160, 41080-Sevilla, Spain}
\email{arias@us.es}
\author{J. van de Lune}
\address{Langebuorren 49, 9074 CH Hallum, The Neterlands\newline (Formerly at
CWI, Amsterdam ) }
\email{j.vandelune@hccnet.nl}
\begin{document}
\maketitle
\section*{Introduction.}

The complexity $\Vert n\Vert$ of a natural number $n$ is defined as the least 
number of $1$'s needed to express the positive integer $n$ using only the 
two operations $+$ and $\times$
and parentheses. At present there is a growing literature about this topic. 
For details we refer to \cite{MP}, \cite{G}, \cite{R}, \cite{A}, \cite{AZ1}, \cite{JI}. 

We present three algorithms to compute the complexity $\Vert n\Vert$ of all natural 
numbers  $ n\le N$.  The first of them is a brute force algorithm, computing 
all these complexities in time $\Orden(N^2)$ and space $\Orden(N\log^2 N)$. 
The main problem of this algorithm is the time needed for the computation. 
In 2008 there appeared three independent solutions to this  problem:
\cite{SS}, \cite{MNF}, \cite{AL}. All three are very similar. Only \cite{SS}
gives an estimation of the performance of its algorithm, proving that the
algorithm computes the complexities in time $\Orden(N^{1+\beta})$, where 
$1+\beta =\log3/\log2\approx1.584963$. The other two algorithms, presented in 
\cite{MNF} and \cite{AL}, were very similar but both superior to the one in \cite{SS}.
In Section \ref{S:2} we  present a version of these algorithms  and in Section 
\ref{S:UB} it is shown that they run in time $\Orden(N^\alpha)$ and space 
$\Orden(N\log\log N)$. (Here $\alpha = 1.230175$). 

The authors of the present paper, at the moment of sending \cite{AL} to a journal 
(Oct. 2008)
saw Martin N. Fuller's program \cite{MNF} posted in the OEIS.
Recognizing 
the similarity of our own approach in \cite{AL} we contacted  Fuller and proposed
to publish  our results jointly. This started a very active collaboration,
improving the results in \cite{MNF} and \cite{AL}. 
The main problem with the algorithm presented in Section \ref{S:2} concerned the
space requirement. In \cite{MNF} Fuller
has given the idea of how to reduce the space requirements. 
He then programmed this improved algorithm and used it to compute 
the complexities for all $n\le 10^{11}$.  

For some reason (unknown to us) there was an interruption of this collaboration 
after which we 
could not contact Fuller any more.
The present paper is the result of the above collaboration and we both think that 
Fuller should have been the main author of this paper.  Of course he is not 
responsible for any errors contained in it.

In Section \ref{S:2} we present the algorithm of \cite{MNF} and \cite{AL}. The main 
advantage of this algorithm with respect to that in \cite{SS} is the definition 
of $\kMax$ in Section \ref{2:7}. This explains  the difference in performance from 
$\Orden(N^{1+\beta})$ to $\Orden(N^\alpha)$.

In Section \ref{S:F}
we present a detailed description of this space-improved  algorithm of Fuller 
and in Section \ref{S:PF} we prove that it runs in time $\Orden(N^\alpha)$
and space $\Orden(N^{(1+\beta)/2}\log\log N)$, where 
$\alpha=1.230175$ and $(1+\beta)/2\approx0.792481$. 

We have delayed the publication of the present paper in the hope of again getting
in contact with M.  Fuller. 
In the mean time it has been published \cite{JI} where the 
computation of the complexities of all $n\le 10^{12}$ is announced.  
In \cite{JI}
it is said that they have used an algorithm inspired by the description in \cite{MNF} of 
the space-improved algorithm of Fuller, but do not give a detailed description of 
their algorithm.  The  results proved in this paper about the 
performance of the algorithms
are not trivial and are still  missing.

There are many known  results about the complexity of natural numbers. We will use the inequality $\Vert n\Vert\le 3\log_2n$ for $n>1$, which appeared first in \cite{G}.

Since  we try to explain two algorithms, we have tried to concentrate on explaining
things to human beings how to proceed instead of trying to explain to a computer. This is the definition of \emph{Literate programming}. We follow the recommendations
of Knuth in \cite{K}. 

The full power of these programs will be apparent  when implemented in a 
language such as C or Pascal, 
but to explain the algorithm we have preferred Python as a language. In this way 
we can concentrate on the computations needed and not on the manipulation of  data. 

To one not used to Python it is a very simple language, 
the main difficulty possibly being that
the scopes of the for, while, etc. loops are indicated by  indentations. We also 
note 
that for integers $a$ and $b$  Python defines $a/b$ as $\lfloor \frac{a}{b}\rfloor$.

\section{Brute force algorithm.}\label{Stwo}

\xdef\frame{frame=none, xleftmargin = 0.5cm, xrightmargin=0.5cm}

This is the algorithm  used in \cite{A} to compute the values 
of $\Vert n\Vert$ for $1\le n\le \nMax$ with $\nMax = 200\,000$.

It is based on the fact that $\Vert 1\Vert =1$, and (also see Section \ref{S2:1n})
\begin{equation}
\Vert n\Vert = \min_{\substack{a,b<n\in\N\\ a+b=n\text{ or } ab=n}}
\Vert a\Vert+\Vert b\Vert.
\end{equation}
\bigskip

\lstset{escapeinside={(*@}{@*)}}

\begin{python}[frame=none, xleftmargin = 0.5cm,
xrightmargin=0.5cm ]
<(*@\ref{Stwo}@*) Brute force algorithm>  (*@$\equiv$@*)
nMax = 20000
Compl={}
s = range(nMax+1)
divisors = [[1] for n in s]
for k in range(2, nMax+1):
    j=1
    m = k
    while m <= nMax:
        divisors[m].append(k)
        m = m+k
Compl[1]=1
for n in range(2, nMax+1):
    S=nMax+1
    for k in range(1,1+n/2):
        a = Compl[k]+Compl[n-k]
        if a < S:
            S=a
    div_n = divisors[n]
    tau_n = len(div_n)
    P = nMax+1
    for k in range(1,tau_n-1):
        d = div_n[k]
        a = Compl[d]+Compl[n/d]
        if a < P:
            P = a
    Compl[n] = min(S,P)
\end{python}

We will not explain this simple minded algorithm. It is not very useful. At 
the end $\Compl[\,n\,]$ will give us the value of $\Vert n\Vert$ for 
$1\le n\le \nMax$. Observe that for each $n$ we compute
\begin{displaymath}
S := \min_{\substack{a,b<n\in\N\\ a+b=n}}
\Vert a\Vert+\Vert b\Vert\quad\text{and}\quad P:=
\min_{\substack{a,b<n\in\N\\ ab=n}}
\Vert a\Vert+\Vert b\Vert.
\end{displaymath}
The complexity of $n$ is then the  minimum of these two quantities:
$\Vert n\Vert=\min(S, P)$. 

It is easy to prove the following
\begin{proposition}
The above brute force algorithm computes the values of $\Vert n\Vert$ for $1\le n\le N$ 
in time $\Orden(N^2)$ and space $\Orden(N\log^2 N)$.
\end{proposition}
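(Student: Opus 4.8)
The plan is to analyze the two nested loops separately—the time spent building the divisor table and the time spent in the main complexity recursion—and then to bound the total storage.

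First I would estimate the running time. The setup loop that fills \texttt{divisors} iterates, for each $k$ from $2$ to $N$, over the multiples $k, 2k, 3k, \dots$ up to $N$, appending $k$ to each. The total number of append operations is therefore $\sum_{k=2}^N \lfloor N/k\rfloor = \Orden(N\log N)$, which is dominated by the main loop, so I would set it aside. For the main loop, fix $n$. The sum-search computes $S$ by examining $a+b=n$ for $k$ from $1$ to $n/2$, costing $\Orden(n)$ operations. The product-search examines each nontrivial divisor of $n$, costing $\Orden(\tau(n))$ where $\tau(n)$ is the number of divisors, which is negligible compared with the $\Orden(n)$ from the additive part. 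Summing over $n$, the dominant contribution is
\begin{displaymath}
\sum_{n=2}^N \Orden(n) = \Orden(N^2),
\end{displaymath}
which gives the claimed time bound.

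Next I would bound the space. The array \texttt{divisors} stores, for each $n\le N$, the list of its divisors, so the total number of stored entries is $\sum_{n\le N}\tau(n)=\Orden(N\log N)$ by the standard divisor-sum estimate. The key point is that each stored divisor is an integer no larger than $N$, requiring $\Orden(\log N)$ bits, so the divisor table occupies $\Orden(N\log^2 N)$ bits. The dictionary \texttt{Compl} holds one complexity value per $n\le N$; since $\Vert n\Vert\le 3\log_2 n$ by the inequality quoted in the introduction, each value fits in $\Orden(\log\log N)$ bits, and \texttt{Compl} contributes only $\Orden(N\log\log N)$ bits, which is absorbed. Hence the divisor table dominates and the total space is $\Orden(N\log^2 N)$.

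I do not expect a serious obstacle here: the proof is essentially two applications of the divisor-sum estimate $\sum_{n\le N}\tau(n)=\Orden(N\log N)$ together with the arithmetic-series bound $\sum_{n\le N}n=\Orden(N^2)$. The one point requiring a little care is the space accounting—namely remembering the $\log N$ bit-length factor per stored integer, which turns the $\Orden(N\log N)$ count of divisor entries into the stated $\Orden(N\log^2 N)$ bound. One should also confirm that the Python model used here charges $\Orden(1)$ per arithmetic comparison on these word-sized (or nearly word-sized) quantities, so that the operation counts above translate directly into the time bound.
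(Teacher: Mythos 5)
Your proof is correct and follows essentially the same approach as the paper, which only remarks that the complexities themselves need $\Orden(N\log\log N)$ space while the stored divisor lists require $\Orden(N\log^2 N)$; your analysis simply fills in the details (the $\sum_{n\le N}\tau(n)=\Orden(N\log N)$ entry count times $\Orden(\log N)$ bits per entry, and the $\Orden(n)$ sum-search per $n$ giving $\Orden(N^2)$ time).
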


To store the complexities we need only $\Orden(N\log\log N)$ space. But our program
stores the divisors of all $1\le n\le N$ requiring $\Orden(N\log^2N)$ space. 

\section{Time Improved Algorithm.}\label{S:2}

\subsection{Idea of the algorithm}\label{S2:1n}
The main disadvantage of the brute force algorithm is its running time
$\Orden(N^2)$. This time is spent mainly on checking all the instances 
of $\Vert n-k\Vert+\Vert k\Vert$ for $1\le k\le n/2$. We will improve the time
by finding a suitable $\kMax$ such that 
\begin{equation}\label{mainPropertykMax}
S=\min_{1\le k\le n/2}\Vert k\Vert+\Vert n-k\Vert=\min_{1\le k\le\, \kMax}\Vert k\Vert+\Vert n-k\Vert.
\end{equation}

In this new program we will define a function: `$\complexity(\cdot)$'. Given an integer
$\nMax$, the output of $B = \complexity(\nMax)$ will be a  list $B$ 
containing the values of
the complexities of the numbers $1\le n\le \nMax$. More precisely we will have
$B[\,n\,]=\Vert n\Vert$.

\subsection{Plan of the second algorithm.}\label{2.2}
After some initializations the program will consist of only one main loop.
In this loop  we will compute successively the complexities of 
the natural numbers $2\le n\le \nMax$. The first part of this computation 
will consist of the computation of an adequate $\kMax$ for the corresponding $n$.
The computation of $\kMax$ uses an auxiliary function $E$. 

We import the module \emph{math} because we are using the mathematical function 
log in our program. 

\text{$\langle$\ref{2.2} Time improved algorithm $\rangle$}
\begin{python}[frame=none, xleftmargin = 0.5cm,
xrightmargin=0.5cm ]
<(*@\ref{2:3}@*) definition of E>
from math import *
def complexity(nMax):
    <(*@\ref{2:4}@*) Initializing Compl>
    <(*@\ref{2:5}@*) Main Loop>
    return Compl
\end{python}

\subsection{Definition of  $E$.}\label{2:3}
For each natural number $k$ let $E(k)$ be the largest number with complexity $k$ 
(see \cite{AZ1}).  Then $E(1)=1$, and for $j\ge1$,   
$E(3j)=3^j$, $E(3j+1)=4\cdot3^{j-1}$, 
$E(3j-1)=2\cdot 3^{j-1}$. We  extend this definition by $E(0):=1$. In this 
way $E$ coincides with the sequence A000792 in the OEIS \cite{OEIS}. We have
\begin{equation}
3^{(n-1)/3}\le E(n)\le  3^{n/3}\qquad n\ge0.
\end{equation}
Note that $E$ is a non decreasing function.
The following algorithm computes $E$:
\begin{python}[frame=none, xleftmargin = 0.5cm,
xrightmargin=0.5cm ]
<(*@\ref{2:3}@*) definition of E>
def E(n):
    if n == 0:
        return 1
    result = 1
    while n > 4:
        result *= 3
        n -= 3
    return result * n
\end{python}
For later application we  state the following easily proved fact.
\begin{proposition}
The cost of computing $E(n)$ is $\Orden(n)$ operations and $\Orden(n)$ space.
\end{proposition}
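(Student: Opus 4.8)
The plan is to track, separately, the number of arithmetic operations performed by the routine and the amount of storage its variables occupy, reading both off directly from the structure of the \texttt{while} loop.

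First I would count iterations. If $n=0$ the routine returns immediately after one comparison, so assume $n\ge 1$. The loop body is entered exactly while the current value of $n$ exceeds $4$, and each pass decreases $n$ by $3$; hence the number of iterations is $\lceil (n-4)/3\rceil$ when $n>4$ and $0$ otherwise, which in either case is $\Orden(n)$. Each iteration performs only a bounded number of steps---one comparison, one multiplication \texttt{result *= 3}, and one subtraction \texttt{n -= 3}---while the code outside the loop (the base case and the final product \texttt{result * n}) costs $\Orden(1)$. Counting each arithmetic operation on an integer as one unit, the total is $\Orden(n)$ operations.

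For the space bound I would observe that the only variable whose size grows is \texttt{result}. After $j$ iterations it holds $3^{j}$ with $j\le n/3$, and the returned value equals $E(n)$, for which the bound stated just above gives $E(n)\le 3^{n/3}$. Thus every integer handled has at most $\log_2 E(n)\le \tfrac{n}{3}\log_2 3=\Orden(n)$ bits, while the remaining variables ($n$ itself and the loop counter) need only $\Orden(\log n)$ bits. Hence the working space is $\Orden(n)$.

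The one point that needs to be pinned down is the cost model, and this is really the only obstacle. If one charged the full bit-cost of each multiplication, the $j$-th pass would cost $\Orden(j)$ bit operations and the sum over $j\le n/3$ would be $\Orden(n^2)$, not $\Orden(n)$. The statement is therefore to be read in the model where each arithmetic operation on a (multiple-precision) integer counts as a single operation; under that convention the analysis above is immediate. I would state this convention explicitly at the outset so that the $\Orden(n)$ claim is unambiguous.
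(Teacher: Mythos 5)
Your proof is correct and follows essentially the same route as the paper's, whose entire argument is the single observation that writing $3^{n/3}$ requires $\Orden(n)$ space, with the $\Orden(n)$ iteration count of the while loop left implicit. Your explicit caveat about the cost model---that the $\Orden(n)$ operation count presumes unit cost per multiple-precision arithmetic operation, since full bit-cost accounting of the multiplications would give $\Orden(n^2)$---is a sound clarification of a convention the paper silently assumes.
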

\begin{proof}
Observe that to write $3^{n/3}$ we need $\Orden(n)$ space. 
\end{proof}

\subsection{Initializing  $\Compl$.}\label{2:4}

The complexity function outputs a list $\Compl\,[\,\cdot\,]$. We first fill this list with 
a value which is an upper bound for all the true complexities. 
This  upper bound of $\Compl\,[\,n\,]$ will be denoted by $\cMax$.
Since $\Vert n\Vert\le \frac{3}{\log2}\log n$, we may take
this bound equal to  $\lfloor 3\log \nMax/\log2\rfloor$. We take a unit more
because in  case $\nMax$ is a power of $2$ we are near the singularity of 
the $\lfloor\cdot\rfloor$ function.

We also initialize  $\Compl[\,1\,]=1$.

\begin{python}[frame=none, xleftmargin = 0.5cm,
xrightmargin=0.5cm ]
<(*@\ref{2:4}@*) Initializing Compl>  (*@$\equiv$@*)
cMax = int(3*log(nMax)/log(2))+1
Compl =[cMax for n in range(0,nMax+1)]
Compl[1] = 1
\end{python}

\subsection{Main Loop.}\label{2:5}

Now we have the correct value of $\Compl\,[\,1\,]$. Dynamically we compute $\Compl\,[\,n\,]$
for $2\le n\le \nMax$.
The pertinent loop consists of the following parts:

\begin{python}[frame=none, xleftmargin = 0.5cm,
xrightmargin=0.5cm ]
<(*@\ref{2:5}@*) Main Loop>  (*@$\equiv$@*)
for n in range(2,nMax+1):
    <(*@\ref{2:6}@*) usual best value>
    <(*@\ref{2:7}@*) computing kMax>
    <(*@\ref{2:8}@*) testing the sums>
    <(*@\ref{2:9}@*) testing the products>
\end{python}

\subsection{Usual best value.}\label{2:6}
When computing $\Vert n\Vert$ we already have $\Compl\,[\,j\,]=\Vert j\Vert$ 
for all $1\le j\le n-1$,
and for $k\ge n$ we have $\Compl\,[\,k\,]\ge\Vert k\Vert$. We will proceed
to check all sums  and products to replace $\Compl\,[\,k\,]$ by better values, so 
that eventually we will obtain the correct values.

We call  $\Vert n-1\Vert+1$ the \emph{usual best value} of $\Vert n\Vert$.
At this stage we simply replace $\Compl\,[\,n\,]$ by this bound if needed.

\begin{python}[frame=none, xleftmargin = 0.5cm,
xrightmargin=0.5cm ]
<(*@\ref{2:6}@*) Usual best value>  (*@$\equiv$@*)
a = Compl[n-1]+1
if a < Compl[n]:
    Compl[n] = a
\end{python}

\subsection{Computing  $\kMax$.}\label{2:7}
To compute $\kMax$ we use  a bound $s$  with
$\Vert n\Vert\le s$. Here we always  take  $s=\Vert n-1\Vert+1$.

Here is our  procedure to compute $\kMax$. Subsequently
we will prove that it has the desired properties. 

\begin{python}[frame=none, xleftmargin = 0.5cm,
xrightmargin=0.5cm ]
<(*@\ref{2:7}@*) Computing kMax>  (*@$\equiv$@*)
target =Compl[n-1]
t = target/2
while E(t)+E(target-t) < n:
    t = t-1
kMax = E(t)
\end{python}

\begin{proposition}\label{kMaxbound}
For $n\ge 3$ the above procedure stops in  finite time giving a value
\begin{equation}\label{kMaxIneq1}
1\le \kMax \le \frac{n}{2}\Bigl(1-\sqrt{1-\frac{4}{n^2}3^{\Vert n-1\Vert/3}}\;\Bigr).
\end{equation}
\end{proposition}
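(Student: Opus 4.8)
The plan is to translate the two facts that the loop enforces into algebraic constraints on $\kMax$. Write $s:=\Compl[n-1]=\Vert n-1\Vert$ (this equality is the dynamic‑programming invariant used in Section~\ref{2:6}), let $t^\ast$ be the value of \texttt{t} at which the loop halts, and set $u:=E(t^\ast)=\kMax$ and $v:=E(s-t^\ast)$. The loop stops exactly when $E(t)+E(s-t)<n$ first fails, so $u+v\ge n$; and since $t^\ast\le\lfloor s/2\rfloor\le s-t^\ast$, the monotonicity of $E$ gives $u\le v$. Finally, the bound $E(a)\le 3^{a/3}$ from Section~\ref{2:3} yields the product estimate $uv=E(t^\ast)E(s-t^\ast)\le 3^{t^\ast/3}3^{(s-t^\ast)/3}=3^{s/3}$. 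The point of the whole argument is that \eqref{kMaxIneq1} is precisely the assertion $u\le r_-$, where $r_-=\tfrac{n}{2}\bigl(1-\sqrt{1-\tfrac{4}{n^2}3^{s/3}}\bigr)=\tfrac12\bigl(n-\sqrt{n^2-4\cdot 3^{s/3}}\bigr)$ is the smaller root of $x^2-nx+3^{s/3}$.

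First I would settle termination and the lower bound $\kMax\ge1$. Since $E(s)$ is by definition the largest integer of complexity $s=\Vert n-1\Vert$, and $n-1$ itself has complexity $s$, we have $E(s)\ge n-1$. Hence at $t=0$ the test value is $E(0)+E(s)=1+E(s)\ge n$, so the while‑condition is already false there; as \texttt{t} starts at $\lfloor s/2\rfloor$ and decreases by $1$ each step, the loop halts after at most $\lfloor s/2\rfloor+1$ iterations with some $t^\ast\ge0$, never evaluating $E$ at a negative argument. Because $E(t^\ast)\ge E(0)=1$, this gives $\kMax\ge1$.

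For the upper bound I would combine the three constraints. From $u\le v$ and $uv\le 3^{s/3}$ we get $u^2\le uv\le 3^{s/3}$, so $u\le 3^{s/6}$. From $u+v\ge n$ we get $v\ge n-u$, and multiplying by $u>0$ gives $u(n-u)\le uv\le 3^{s/3}$, that is $u^2-nu+3^{s/3}\ge0$. This places $u$ outside the open interval $(r_-,r_+)$ cut out by the two roots, so $u\le r_-$ or $u\ge r_+$. To select the correct branch I would use $u\le 3^{s/6}=\sqrt{3^{s/3}}\le r_+$ (the geometric mean of the roots always lies between them): this forces $u<r_+$ when the roots are distinct, and in the degenerate equal‑roots case $r_-=r_+$ makes the conclusion automatic. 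Either way $u\le r_-$, which is \eqref{kMaxIneq1}.

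The one step needing genuine care—and the main obstacle—is guaranteeing that $r_\pm$ are real, i.e.\ that $n^2\ge 4\cdot 3^{s/3}$; otherwise the right‑hand side of \eqref{kMaxIneq1} is not even real and the quadratic argument collapses. Indeed $n=2$ is excluded from the hypothesis precisely because there $4\cdot 3^{\Vert1\Vert/3}>2^2$. For large $n$ the reality follows from the bound $\Vert n-1\Vert\le 3\log_2(n-1)$ quoted in the introduction, which gives $3^{s/3}\le(n-1)^{\log_2 3}$ and hence $4\cdot 3^{s/3}\le n^2$ once $n$ is large, since $\log_2 3<2$. That loose estimate is too weak at the very bottom (for $n=3$ one has $\Vert2\Vert=2$, strictly below $3\log_2 2=3$), so I would dispatch the finitely many remaining small $n\ge3$ by directly comparing $4\cdot 3^{\Vert n-1\Vert/3}$ with $n^2$. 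Threading this discriminant check cleanly through the argument, rather than the elementary algebra, is where the actual effort lies.
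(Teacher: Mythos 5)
Your proof is correct, but it follows a genuinely different route from the paper's. The paper argues \emph{dynamically}: it shows that the initial value $t_0=\lfloor\Vert n-1\Vert/2\rfloor$ places $x=3^{t_0/3}$ strictly between the two roots $x_1<x_2$ of $x^2-nx+3^{\Vert n-1\Vert/3}$ (a separate estimate, established only for $n\ge 31$), observes that while $3^{t/3}>x_1$ the loop condition keeps holding, and concludes that the loop can only exit with $3^{t_e/3}\le x_1$, whence $\kMax=E(t_e)\le x_1$; because this needs both the discriminant check ($n\ge29$) and the initial-value check ($n\ge31$), the paper then verifies \eqref{kMaxIneq1} directly for $3\le n\le 31$. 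You instead argue \emph{statically}, using only the information available at exit: $u+v\ge n$, $u\le v$, and $uv\le 3^{s/3}$, from which $u^2-nu+3^{s/3}\ge 0$ places $u$ outside $(r_-,r_+)$, and the geometric-mean bound $u\le 3^{s/6}\le r_+$ selects the lower branch. This buys two real simplifications: you never need the paper's estimate that the starting point lies between the roots, and your finite verification shrinks from re-running the procedure on $3\le n\le31$ to checking the single discriminant inequality $4\cdot 3^{\Vert n-1\Vert/3}\le n^2$ for small $n$ (which indeed holds for all $3\le n\le 28$). What the paper's loop-tracking buys in exchange is a sharper picture of \emph{where} the loop stops (namely, at the first $t$ with $3^{t/3}\le x_1$), which your argument does not provide. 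Two minor points to nail down in a full write-up: make your threshold explicit (your estimate $4(n-1)^{\log_2 3}<4n^{\log_2 3}\le n^2$ holds for $n\ge 29$, matching the paper's), and record that both roots are positive (their product is $3^{s/3}>0$ and their sum is $n>0$), which is what legitimizes the comparison $r_-\le 3^{s/6}\le r_+$.
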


\begin{proof}
Observe that in the second line we obtain $t=\lfloor\Vert n-1\Vert/2\rfloor$
(by the convention of Python $m/2$ returns the integer part of the fraction $\frac{m}{2}$). 
Therefore, for $n\ge3 $  after the second line $t$ will be a natural number or $0$.
In each while-loop of the while  $t$ is replaced by $t-1$, but if we arrive at
$t=0$ we will reach the condition of the while loop
\begin{displaymath}
E(0)+E(\Vert n-1\Vert-0)\ge 1+(n-1)=n
\end{displaymath}
and the program will stop with $t\ge0$. Hence at  finite time.

For each natural number $t$ we will have 
\begin{displaymath}
E(t)+E(\Vert n-1\Vert-t)\le 3^{t/3}+3^{(\Vert n-1\Vert-t)/3}<n
\end{displaymath}
whenever $x^2-nx+3^{\Vert n-1\Vert/3} <0$ with $x=3^{t/3}$. 
For $n\ge29$, the roots $x_1$, $x_2$ of this polynomial in $x$ are real, because its discriminant
is $n^2-4\cdot 3^{\Vert n-1\Vert/3}>0$. In fact
\begin{displaymath}
2\cdot 3^{\Vert n-1\Vert/6}\le 2\exp\Bigl(\frac{\log 3}{6}\frac{3}{\log 2}
\log n\Bigr)<n, \qquad n\ge 29.
\end{displaymath}
We also have $0<x_1<x_2$ because $x_1 x_2>0$, so that $x_1$ and $x_2$ have the same 
sign and since $x_1+x_2=n$ they are positive.   

The initial value of $t=\lfloor\Vert n-1\Vert/2\rfloor$, so that
$t=\Vert n-1\Vert/2-\varepsilon$ with $\varepsilon=0$ or $1/2$.  For $n\ge 31$, 
we will have
\begin{displaymath}
3^{t/3}+3^{(\Vert n-1\Vert-t)/3}
= 3^{\Vert n-1\Vert/6-\varepsilon/3}+3^{\Vert n-1\Vert/6+\varepsilon/3}
\le(3^{-\varepsilon/3}+3^{\varepsilon/3})\exp\Bigl(\frac{\log3}{\log 4}\log n\Bigr)<n.
\end{displaymath}
It follows that this initial value of $t$ satisfies $x_1<3^{t/3}<x_2$ when $n\ge31$.
While $3^{t/3}>x_1$ we will get $E(t)+E(\Vert n-1\Vert -t)<n$, so that  we end
the while loop with a value  $t=t_e$ such that $3^{t_e}\le x_1$. 
Then
\begin{equation}
\kMax=E(t_e)\le 3^{t_e/3}\le x_1= \frac{n}{2}\Bigl(
1-\sqrt{1-4\cdot3^{\Vert n-1\Vert/3}/n^2}\Bigr).
\end{equation}
While computing the needed complexities one may check that \eqref{kMaxIneq1} 
is also true for $3\le n\le 31$.
\end{proof}

\begin{corollary}\label{kmaxbound}
For $n\ge 2$  we have $\kMax\le 2n^\beta$ where 
$\beta=\frac{\log3}{\log2}-1\approx0.584963$. 
\end{corollary}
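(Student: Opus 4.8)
The plan is to feed the bound \eqref{kMaxIneq1} of Proposition \ref{kMaxbound} through two elementary simplifications and then invoke the complexity estimate $\Vert m\Vert\le 3\log_2 m$ quoted in the Introduction. The organizing identity is that $\beta=\frac{\log3}{\log2}-1$ satisfies $1+\beta=\log_2 3$; this is precisely what will convert a power of $3$ into the desired power of $n$, so I would record it at the outset.

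First I would reduce the square-root expression. For $0\le x\le1$ one has the elementary inequality $1-\sqrt{1-x}\le x$, since $h(x)=x-1+\sqrt{1-x}$ satisfies $h(0)=h(1)=0$ and $h''(x)=-\tfrac14(1-x)^{-3/2}<0$, so a concave function vanishing at both endpoints is nonnegative between them. Applying this with $x=\frac{4}{n^2}3^{\Vert n-1\Vert/3}$, which lies in $[0,1]$ exactly when the discriminant appearing in the proof of Proposition \ref{kMaxbound} is nonnegative (that is, for $n\ge29$), the bound \eqref{kMaxIneq1} collapses to
\[
\kMax\le \frac{n}{2}\cdot\frac{4}{n^2}\,3^{\Vert n-1\Vert/3}=\frac{2}{n}\,3^{\Vert n-1\Vert/3}.
\]

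Next I would bound the remaining factor using $\Vert n-1\Vert\le 3\log_2(n-1)$, valid for $n\ge3$. This yields $3^{\Vert n-1\Vert/3}\le 3^{\log_2(n-1)}=(n-1)^{\log_2 3}=(n-1)^{1+\beta}\le n^{1+\beta}$, and hence $\kMax\le \frac{2}{n}\,n^{1+\beta}=2n^{\beta}$, which is exactly the claim, for every $n\ge29$.

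Finally, since both simplifications above required $n\ge29$, I would dispose of the finitely many remaining cases $2\le n\le28$ by direct computation, in the same spirit as the closing line of the proof of Proposition \ref{kMaxbound}; for instance at $n=2$ the procedure returns $\kMax=E(0)=1\le 3=2\cdot2^{\beta}$. I do not anticipate any real obstacle: the only point requiring care is verifying $x\in[0,1]$ before invoking $1-\sqrt{1-x}\le x$, which is the reason the argument is split at the discriminant threshold $n=29$ rather than run uniformly from $n=3$.
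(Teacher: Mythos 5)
Your proof is correct and takes essentially the same route as the paper's: both feed the bound of Proposition \ref{kMaxbound} through the elementary inequality $1-\sqrt{1-x}\le x$ (which the paper states in the equivalent form $\sqrt{1-x}>1-x$) together with $3^{\Vert n-1\Vert/3}\le n^{1+\beta}$, valid since $1+\beta=\log_2 3$, and then dispose of the finitely many small $n$ by direct computation. The only difference is the order of the two steps --- you apply the square-root inequality first and the complexity bound second, while the paper substitutes $3^{\Vert n-1\Vert/3}< n^{1+\beta}$ inside the square root before removing it --- which changes nothing of substance.
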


\begin{proof}
We have $3^{\Vert n-1\Vert/3}< 3^{\log n/\log 2}=n^{1+\beta}$ and $4n^{\beta-1}<1$ for 
$n\ge 29$, so that in this case
\begin{displaymath}
\sqrt{1-\frac{4}{n^2}3^{\Vert n-1\Vert/3}}>\sqrt{1-4 n^{\beta-1}}>1-4 n^{\beta-1}.
\end{displaymath}
Therefore, by Proposition \ref{kMaxbound} we get 
\begin{displaymath}
\kMax\le \frac{n}{2}\Bigl(
1-\sqrt{1-\frac{4}{n^2}3^{\Vert n-1\Vert/3}}\Bigr)< 2n^\beta.
\end{displaymath}
For $2\le n\le 29$ we may check directly that $\kMax< 2n^\beta$  (for $2\le n\le 50$ we have
$\kMax=1$ except for $n=24$ and $n = 48$ for which $\kMax=2$).
\end{proof}
We have the following main result about $\kMax$. 
\begin{proposition}\label{P:kMax}
For $n\ge2$ we have
\begin{displaymath}
S:=\min_{1\le k\le n/2}\Vert k\Vert+\Vert n-k\Vert=\min_{1\le k\le \kMax}
\Vert k\Vert+\Vert n-k\Vert.
\end{displaymath}
\end{proposition}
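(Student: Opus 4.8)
The plan is to prove the two minima coincide by showing that every index $k$ in the leftover range $\kMax<k\le n/2$ is \emph{harmless}: its sum $\Vert k\Vert+\Vert n-k\Vert$ is at least the value already attained at $k=1$, which lies in the restricted range. The one structural fact I will lean on is the defining property of $E$: since $E(j)$ is the largest integer of complexity $j$ and $E$ is nondecreasing, $E(j)$ is in fact the largest integer of complexity \emph{at most} $j$, so that $\Vert m\Vert\le j$ if and only if $m\le E(j)$.

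First I would dispose of the trivial inclusion. Since \eqref{kMaxIneq1} gives $\kMax\le n/2$, the index set $\{1,\dots,\kMax\}$ is contained in $\{1,\dots,\lfloor n/2\rfloor\}$, so the minimum on the right is at least the minimum on the left; it remains to prove the reverse. Writing $T=\Vert n-1\Vert$ (the value \texttt{target} in the code) and $f(t)=E(t)+E(T-t)$, I would record what the \texttt{while} loop guarantees: by Proposition \ref{kMaxbound} it terminates at some $t_e$ with $\kMax=E(t_e)$, with $f(t_e)\ge n$, and with $f(t)<n$ for every integer $t$ satisfying $t_e<t\le\lfloor T/2\rfloor$. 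Because $f$ is symmetric, $f(t)=f(T-t)$, this one-sided inequality propagates across the whole middle band: $f(t)<n$ for all integers $t\in(t_e,\,T-t_e)$. Equivalently, $f(t)\ge n$ forces $t\le t_e$ or $t\ge T-t_e$.

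The core step is then the claim: if $1\le k\le n/2$ and $\Vert k\Vert+\Vert n-k\Vert\le T$, then $k\le\kMax$. To prove it, set $a=\Vert k\Vert$ and $b=\Vert n-k\Vert$, so $a+b\le T$. By the property of $E$ we have $k\le E(a)$ and $n-k\le E(b)$, hence
\[
n=k+(n-k)\le E(a)+E(b)\le E(a)+E(T-a)=f(a),
\]
using $b\le T-a$ and the monotonicity of $E$. Thus $f(a)\ge n$, so by the previous paragraph either $a\le t_e$, giving $k\le E(a)\le E(t_e)=\kMax$, or $a\ge T-t_e$, giving $b\le T-a\le t_e$ and hence $n-k\le E(t_e)=\kMax$; in the latter case $k\le n-k\le\kMax$ because $k\le n/2$. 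Either way $k\le\kMax$, proving the claim.

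Finally I would assemble the pieces. The contrapositive of the claim says that any $k$ with $\kMax<k\le n/2$ satisfies $\Vert k\Vert+\Vert n-k\Vert\ge T+1$. On the other hand, taking $k=1$ (legitimate since $\kMax\ge1$ by Proposition \ref{kMaxbound}) shows the restricted minimum is at most $\Vert1\Vert+\Vert n-1\Vert=T+1$. Hence no index beyond $\kMax$ can undercut the restricted minimum, so the two minima agree; the case $n=2$ is immediate. I expect the main obstacle to be the symmetry/propagation step that upgrades the loop's one-sided guarantee (for $t\le\lfloor T/2\rfloor$) into the two-sided statement over $(t_e,\,T-t_e)$, together with handling the boundary correctly so that the second case $a\ge T-t_e$ is genuinely covered; the remainder is a direct chain of inequalities.
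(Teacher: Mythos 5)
Your proof is correct, and it runs on the same underlying engine as the paper's: the inequality $n=k+(n-k)\le E(\Vert k\Vert)+E(\Vert n-k\Vert)$, the monotonicity of $E$, and the while-loop guarantee that $E(t)+E(T-t)<n$ for the values of $t$ the loop stepped through (with $T=\Vert n-1\Vert$). But the packaging is genuinely different. The paper argues by contradiction: it assumes some $k_0\le n/2$ beats the restricted minimum, notes (as you do, via $k=1$) that then $\Vert k_0\Vert+\Vert n-k_0\Vert\le T$, and tracks the member $u$ of the pair $\{k_0,n-k_0\}$ with the \emph{smaller complexity}, so that $\Vert u\Vert\le\lfloor T/2\rfloor$ holds automatically; the one-sided loop guarantee then forces $\Vert u\Vert\le t_e$, hence $u\le E(\Vert u\Vert)\le E(t_e)=\kMax$, and the symmetry of the sum in the pair closes the contradiction. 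You instead track the smaller-\emph{value} member $k\le n/2$, whose complexity $a=\Vert k\Vert$ may be large; that is exactly why you need the extra step of upgrading the one-sided guarantee to the band $(t_e,T-t_e)$ via the symmetry of $t\mapsto E(t)+E(T-t)$, followed by a two-case split (in your second case the partner's complexity is at most $t_e$, so $n-k\le\kMax$, and $k\le n-k$ finishes). Both routes are sound, and your propagation step does work: the two half-ranges $(t_e,\lfloor T/2\rfloor]$ and $[\lceil T/2\rceil,T-t_e)$ together contain every integer of $(t_e,T-t_e)$. The paper's selection trick is leaner---no symmetry step, no cases---while your version isolates a cleaner standalone fact (every $1\le k\le n/2$ with $\Vert k\Vert+\Vert n-k\Vert\le T$ already satisfies $k\le\kMax$) and derives the proposition directly, with the explicit threshold $T+1$, rather than by contradiction.

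One blemish, harmless to the argument: your opening claim that $\Vert m\Vert\le j$ \emph{if and only if} $m\le E(j)$ is false in the ``if'' direction; for instance $11\le 12=E(7)$ but $\Vert 11\Vert=8$. What is true, and all you actually use, is $m\le E(\Vert m\Vert)$ together with the fact that $E$ is nondecreasing, exactly as in the paper's proof.
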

\begin{proof}
If not there would be a $k_0$ with $1\le k_0\le n/2$ and
\begin{equation}\label{k0 exists}
\Vert k_0\Vert+\Vert n-k_0\Vert<\min_{1\le k\le \kMax}
\Vert k\Vert+\Vert n-k\Vert.
\end{equation}
In particular 
$\Vert k_0\Vert+\Vert n-k_0\Vert<
1+\Vert n-1\Vert$, so that 
$\Vert k_0\Vert+\Vert n-k_0\Vert\le \Vert n-1\Vert$.
One of the two numbers $k_0$ and $n-k_0$, let us call it $u$, would 
satisfy $\Vert u\Vert\le \Vert n-1\Vert/2$. Therefore, $\Vert u\Vert\le t_0$, 
the initial value of $t$ in our procedure to get $\kMax$, and 
\begin{displaymath}
\Vert u\Vert+\Vert n-u\Vert \le \Vert n-1\Vert.
\end{displaymath}
We will also have $\Vert u\Vert\le t_e$ the final value of $t$ so that, by 
definition $\kMax=E(t_e)$.  In the other case we would have  $t_e<\Vert u\Vert\le t_0$
so that $t':=\Vert u\Vert$ would be one of the values of $t$ in the procedure 
(not the last) and we 
would  therefore have
\begin{displaymath}
E(t')+E(\Vert n-1\Vert-t')<n.
\end{displaymath}
From this we get a contradiction
\begin{displaymath}
n = u+(n-u)\le E(\Vert u\Vert)+E(\Vert n-u\Vert)\le E(t')+E(\Vert n-1\Vert-t')<n. 
\end{displaymath}
Therefore, $u\le E(\Vert u\Vert)\le E(t_e)=\kMax$, so that by \eqref{k0 exists}
we get the contradiction
\begin{displaymath}
\Vert u\Vert+\Vert n-u\Vert=\Vert k_0\Vert+\Vert n-k_0\Vert<\min_{1\le k\le \kMax}
\Vert k\Vert+\Vert n-k\Vert\le \Vert u\Vert+\Vert n-u\Vert.
\end{displaymath}
\end{proof}

\subsection{Testing the sums.}\label{2:8}
By Proposition \ref{P:kMax} we only have to check 
$\Vert k\Vert+\Vert n-k\Vert<\Compl\,[\,n\,]$ for 
$1\le k\le \kMax$. In \emph{\ref{2:6} Usual best value}  
we have checked $k=1$.

We need not  check $k=2$, $k=3$, \dots, $k=5$.  The first value we will check is $k=6$. We explain:
In \cite{AZ1} it is shown
that the least value  $b$ such that 
\begin{displaymath}
S:=\inf_{1\le k\le n}\Vert k\Vert+\Vert n-k\Vert=\Vert b\Vert+\Vert n-b\Vert
\end{displaymath}
is a \emph{solid number}, where $b$ is a solid number if $b=u+v$ implies
$\Vert b\Vert<\Vert u\Vert+\Vert v\Vert$. 

It follows that we only need to test $\Vert k\Vert+\Vert n-k\Vert<\Compl\,[\,n\,]$ when
$k$ is a solid number. But the sequence of solid numbers starts with 
$1$, $6$, $8$, \dots 
Hence we arrive at the following algorithm to check the sums:

\begin{python}[frame=none, xleftmargin = 0.5cm,
xrightmargin=0.5cm ]
<(*@\ref{2:8}@*) Testing the sums>  (*@$\equiv$@*)
for m in range(6, kMax+1):
    sumvalue = Compl[m]+Compl[n-m]
    if sumvalue < Compl[n]:
        Compl[n] = sumvalue
\end{python}

\subsection{Testing the products.}\label{2:9}
Here we consider all the multiples $k\cdot n$ of $n$ and substitute the 
value of $\Compl\,[\,k\cdot n\,]$ if necessary.  Therefore, when we arrive at the 
case $n$
all its divisors will have been checked, so that we will have the correct
value of $\Compl\,[\,n\,]$. 

\begin{python}[frame=none, xleftmargin = 0.5cm,
xrightmargin=0.5cm ]
<(*@\ref{2:9}@*) Testing the products>  (*@$\equiv$@*)
for k in range(2, min(n, nMax/n)+1):
    prodvalue = Compl[k]+Compl[n]
    if prodvalue < Compl[k*n]:
        Compl[k*n] = prodvalue
\end{python}

\section{Fuller algorithm.}\label{S:F}
In practice the main limitation of the algorithm presented in Section \ref{Stwo} is 
the space requirement $\Orden(N\log\log N)$. We present here an idea of 
Fuller, partially expressed in \cite{MNF} to overcome this difficulty.

The output of the new program is different from that of Section \ref{Stwo}. Now
we will compute successively  the complexities of all numbers $n\le N$ but
we do not store all these values.  So the output of the program will 
consist of several statistics about the complexities of the numbers $1\le n\le N$. 
These statistics, which may change from run to run, 
will be computed 
simultaneously with the complexities. 

\subsection{The basic idea.} The algorithm will store the minimal data needed to calculate $\Vert n\Vert$
\begin{itemize}
\item{$\Vert a\Vert $ for $a\mid n$.}
\item{$\Vert k\Vert$ for $k\le \kMax$.}
\item{$\Vert n-k\Vert$ for $k\le \kMax$.}
\end{itemize}
Small divisors and small summands of $n$  will be stored in a \emph{fixed block}, 
containing $\Vert n\Vert$ up to some fixed upper limit $H$.  
Recent calculations will be stored in a \emph{running block} around $n$ 
which is used to retrieve $\Vert n-k\Vert$.  
Large divisors $\Vert n/2\Vert$, $\Vert n/3\Vert$, etc. will be calculated and 
stored in the same way as $n$, using \emph{additional running blocks}
and sharing the same fixed block. 

The program will have several parameters: $N$ (or $\nMax$ in the program) will 
be the limit to which we compute the complexities. 
We will have a fixed block where we will store the complexities for 
$1\le n\le H$. Thus $H$ is the length of the fixed block. 
We will have a number of running blocks $B_j$ all of them of length $L=2\ell$. 
$B_1$ will be the main running block, where we will store the last 
computed complexities. The other running blocks  $B_2$, $B_3$,  \dots\ will 
contain the complexities of the large divisors $n/2$, $n/3$, \dots\ of the 
numbers  immediately following $B_1$. 

All running blocks $B_j$ (for $1\le j\le N/H$), each of length $L$ will 
contain $\Vert n\Vert$ for the range $H_j-L<n\le H_j$,  where $H_j$ will 
start at $H$ and then move in steps of size $\ell$ to finish at or just above $N/j$.

For the computation to run smoothly we need that these parameters satisfy
\begin{equation}\label{E:ineq} 
\kMax < \ell,\quad N\le \ell H,\quad \ell\mid H,\quad \ell\mid N,\quad
H\ge\sqrt{LN}.
\end{equation}

The algorithm was used by Fuller in 2009 to compute $n$ up to 
$N=10^{11}$ using just over $10^9$ bytes of memory. 
The running time was 106 hours using 1 processor of a 2.8 GHz dual core PC. 
The parameters used for this run 
were $N=10^{11}$, $H=10^9$, $L=10^6$. 
Sieving was used to find potential factors $2\le a\le b$, $H_j-\ell<ab\le H_j$. 
Thus there was a performance gain by using a large $L$. The program runs roughly 
$1/3$ of the speed when using $L=10^5$ instead of $L=10^6$.

\subsection{General description}\label{3:2}
Roughly  the program executed the following steps:

\begin{itemize}
\item[1.]{Calculate $\Vert n\Vert$ for the fixed block $B_0$,  using  the earlier algorithm of Section \ref{S:2}.}
\item[2.]{Copy from $B_0$ into $B_1$ for the range $H-L<n\le H$. Set $H_1=H$. }
\item[3.]{Iterate the following steps while $H_1<N$:
\begin{itemize}
\item[a.]{Set $h=H_1+\ell$.}
\item[b.]{In descending order $\Bigl\lceil\frac{h}{H}\Bigr\rceil>j\ge1$:
\begin{itemize}
\item[i.]{If $B_j$ has not been initialized, copy it from $B_0$ and set $H_j=H$.}
\item[ii.]{If $H_j<\ell\lceil h/j\ell\rceil$, increase it by $\ell$. Fill 
the bottom half of the new range with the values of the top half of the old range. 
Calculate new values as shown in Section
\ref{3:7}.}
\end{itemize}
}
\end{itemize}
}

\end{itemize}

Here we have the scheme of Fuller's program
 
\begin{python}[frame=none, xleftmargin = 0.5cm,
xrightmargin=0.5cm ]
<(*@\ref{3:2}@*) Fuller Program>  (*@$\equiv$@*)
<(*@\ref{3:9}@*) Some definitions>  
<(*@\ref{3:3}@*) setting parameters>
<(*@\ref{3:4}@*) initialize B[0]>
<(*@\ref{3:5}@*) creating the first running block>
<(*@\ref{3:8}@*) definition of the function Shift>
<(*@\ref{3:7}@*) definition of the function CalculateRunningBlock>
<(*@\ref{3:6}@*) Main Loop>
\end{python}

\subsection{Setting parameters.}\label{3:3}
As we have said we fix the three parameters $N$ or $\nMax$,  $H$
the length of the fixed block, and $\ell$ or  $\step$ being the size of the 
unit shift for the running blocks.  This is also half of the length $L$
of the running blocks. 
We use a function to check that the parameters satisfy the conditions 
\eqref{E:ineq}. The program prints the result of this check. Bad parameters
may cause an index error.

\begin{python}[frame=none, xleftmargin = 0.5cm,
xrightmargin=0.5cm ]
<(*@\ref{3:3}@*) Setting parameters>  (*@$\equiv$@*)
<(*@\ref{3:3:1}@*) def of checkparameters>
nMax = 1000000
H= 200000
step = 10000
L = 2*step
test = checkparameters(nMax,H,step)
if test:
    print 'GOOD PARAMETERS'
else:
    print 'NOT GOOD PARAMETERS'
\end{python}

\subsubsection{Checking parameters.}\label{3:3:1}
The  function `checkparameters$(\ )$' is defined. It checks whether our
parameters satisfy the inequalities 
\eqref{E:ineq}.  We apply Corollary \ref{kmaxbound} to check whether 
$\kMax\le \ell$. 

\begin{python}[frame=none, xleftmargin = 0.5cm,xrightmargin=0.5cm ]
<(*@\ref{3:3:1}@*) def of checkparameters>  (*@$\equiv$@*)
def checkparameters(nMax,H,step):
    test = True
    beta = 0.584962501
    if step < 2*nMax**beta:
        test = False
    if nMax > step*H:
        test = False
    if step*(H/step) != H:
        test = False
    if step*(nMax/step) != nMax:
        test = False
    if H < sqrt(L*nMax):
        test = False
    return test
\end{python}

\subsection{Initialize $B_0$.}\label{3:4}
The fixed block $B_0$ and the running blocks $B_j$ are members of a list
$B$. Here we simply compute the complexities contained in the fixed block
(using the function defined in Section \ref{Stwo}) and put 
them as the first term in the list. The successive terms of this list will 
be the running blocks. Hence we will have $B\,[\,0\,]\,[\,n\,]=\Vert n\Vert$
for $1\le n\le H$.

We also initialize $\Heads$. This will be a list with
$(H_j)_{j=0}^r$ as elements,  where the fixed block $B_0$ will contain the complexities
of $n$ for $1\le n\le H = H_0$, and $B_j$, when initialized,  
will be the list formed by the 
complexities of $n$ for $H_j-L <n\le H_j$. Hence we will have
$B\,[\,j\,]\,[\,n\,]=\Vert n+H_j-L+1\Vert$ for $0\le n\le L-1$.

\begin{python}[frame=none, xleftmargin = 0.5cm,
xrightmargin=0.5cm ]
<(*@\ref{3:4}@*) Initialize (*@$B_0$@*)>  (*@$\equiv$@*)
B = [complexity(H)]
Heads=[H]
\end{python}

\subsection{Creating the first running block.}\label{3:5}
All running blocks $B_j$ with $j\ge1$ are created with $H_j = H$. 
Later they will be
shifted by steps of length $\ell$ to finish at or just above $N/j$.

\begin{python}[frame=none, xleftmargin = 0.5cm,
xrightmargin=0.5cm ]
<(*@\ref{3:5}@*) Creating the first running block>  (*@$\equiv$@*)
<(*@\ref{3:5:1}@*) definition of CreateRunningBlock>
CreateRunningBlock(B,Heads)
\end{python}

\subsubsection{Definition of CreateRunningBlock.} \label{3:5:1}
The definition of this function is very simple. 
We simply copy the last half of the fixed block and put it as running 
block.  We  also have to update the Heads list.

\begin{python}[frame=none, xleftmargin = 0.5cm,
xrightmargin=0.5cm ]
<(*@\ref{3:5:1}@*) definition of CreateRunningBlock>  (*@$\equiv$@*)
def CreateRunningBlock(B,Heads):
    RB = B[0][H-L+1:H+1]
    B.append(RB)
    Heads.append(H)
\end{python} 

\subsection{Definition of the function Shift.}\label{3:8}
The running block $B_j$ contains the complexities of $n$ for 
$H_j-2\ell <n\le H_j$. While running the program this block is shifted  $\ell$ units
to the right. This is done in two steps, first the function Shift simply 
puts $H_j=H_j+\ell$, and copies the second half of the block $B_j$ into the first half
of this block and initializes the new values to any upper bound for $\Vert n\Vert$. 
Subsequently the function  'ComputeRunningBlock' will compute the 
new complexities $||n||$ for $H_j<n\le H_j+\ell$. 

\begin{python}[frame=none, xleftmargin = 0.5cm,
xrightmargin=0.5cm ]
<(*@\ref{3:8}@*) definition of the function Shift>  (*@$\equiv$@*)
def Shift(B,j):
    Heads[j] = Heads[j] + step
    B[j] = B[j] + [255 for n in range(0,step)]
    B[j] = B[j][step:3*step]
\end{python}

The most delicate point of the program will be to show that
the function ComputeRunningBlock can  complete its task. But we may assume
that in some way this has been done and analyze the Main Loop. This allows us 
to have a  picture of which running blocks have been initialized at a given 
point in the program and which values  the $H_j$ will have.

We have initialized the new values of $B_j$ to 255. We need that 
this value is an upper bound of $\Vert n\Vert$ for all $n\le \nMax$. Since
$\Vert n\Vert\le \frac{3}{\log2}\log n$ we may safely take $255$
for $\nMax\le 3.86\times 10^{25}$. The value 255 is useful because in this 
way we may take $B_j$ as a byte array.

\subsection{Main Loop.}\label{3:6}
The main computation is that of the first running block $B_1$, while $H_1$ 
will indicate  where we are in the task of computing all the complexities up to $\nMax$. 
Each run of the main loop consists essentially of a shift of this running 
block from $H_1$ to $H_1+\ell$.  Hence we have to compute 
$\Vert n\Vert$ for $H_1<n\le H_1+\ell$. For $n=ab$ we must read $\Vert a\Vert$
and $\Vert b\Vert$.  This is the purpose of the other running blocks and the 
fixed block. For small $a$, the running block $B_a$ must contain the values 
of $\Vert b\Vert$. The value of $\Vert a\Vert$ will be read from the fixed block.
For $n=a+b$ with $a<\kMax$ we read $\Vert a\Vert$ from the fixed block and 
$\Vert b\Vert$ from the running block $B_1$. 

It follows that we have to shift $B_a$, if needed, before computing $B_1$.
To update $B_a$ we follow the same procedure as to update $B_1$. 
Hence we proceed to update the $B_a$ in descending order of $a$. 
We  also have to initialize the needed running blocks.  

Here is the algorithm for the main loop. We will show that the necessary readings
can easily be performed.

\begin{python}[frame=none, xleftmargin = 0.5cm,
xrightmargin=0.5cm ]
<(*@\ref{3:6}@*) Main Loop>  (*@$\equiv$@*)
while Heads[1] < nMax:
    h = min(Heads[1]+step,nMax)
    for j in range(ceiling(h,H)-1,0, -1):
        newH = step*ceiling(h,(j*step))
        if notInitialized(j)and(newH > H):
            CreateRunningBlock(B,Heads)
        if Heads[j] < newH:
            Shift(B,j)
            CalculateRunningBlock(j,newH)
\end{python}

\begin{proposition}\label{P:6}
At the start of each run of the main loop, except the first,  
we will have initialized the 
running blocks $B_j$ for $1\le j< \lceil H_1/H\rceil$ and 
$H_j=\ell\lceil H_1/j\ell\rceil$.
\end{proposition}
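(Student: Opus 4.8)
The statement is a loop invariant for the main loop of Section~\ref{3:6}, so I would prove it by induction on the number of completed passes through the \texttt{while} loop. Throughout, write $H_1$ for the value of \texttt{Heads[1]} at the start of a pass, and more generally $H_j$ for the head of the block $B_j$. Taking $j=1$ in the asserted identity $H_j=\ell\lceil H_1/(j\ell)\rceil$ forces $H_1=\ell\lceil H_1/\ell\rceil$, i.e. $\ell\mid H_1$; combined with the parameter conditions $\ell\mid H$ and $\ell\mid N$ from \eqref{E:ineq}, this guarantees that on entering the loop (so $H_1<N$) we have $H_1+\ell\le N$, whence $h=\min(H_1+\ell,N)=H_1+\ell$ with no truncation. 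For the base case I would simply read off the first pass: after the initialization of Section~\ref{3:5} it starts with $H_1=H$, its loop index ranges only over the single value $\lceil(H+\ell)/H\rceil-1=1$ (note $\ell\le H$ since $\ell\mid H$), so it merely shifts $B_1$ and ends with $H_1=H+\ell$; this is exactly the invariant at the start of the second pass.

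For the inductive step, assume the invariant at the start of a pass with head $H_1$, and put $h=H_1+\ell$. For an index $j$ let $m_j=\ell\lceil h/(j\ell)\rceil$ denote the value the code stores in \texttt{newH}. The key remark is that $m_j$ is \emph{precisely} the value the invariant will demand of $H_j$ at the next pass, because the new head is $H_1+\ell=h$; so the loop is self-correcting and it suffices to show that each block actually attains $H_j=m_j$. If $B_j$ is already initialized, its old head is $\ell\lceil H_1/(j\ell)\rceil$, and since passing from $H_1$ to $H_1+\ell$ raises the argument $H_1/(j\ell)$ by $1/j\le1$, the ceiling increases by $0$ or $1$, i.e. $m_j$ exceeds the old head by $0$ or $\ell$. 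In the first case the guard $H_j<m_j$ fails and no shift occurs; in the second a single \texttt{Shift} advances the head by exactly $\ell$. Either way $H_j=m_j$ afterwards.

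It remains to handle an index that enters the loop for the first time. Since $\ell\le H$, passing from $H_1$ to $H_1+\ell$ raises $\lceil\,\cdot\,/H\rceil$ by at most $1$, so the loop's upper limit $\lceil h/H\rceil-1$ grows by at most $1$ over the previous pass; hence at most one new index appears, namely $j_0=\lceil H_1/H\rceil$, and it appears exactly when $\lceil h/H\rceil=\lceil H_1/H\rceil+1$. For this $j_0$ I would check $h>j_0H$ (immediate from $j_0=\lceil h/H\rceil-1<h/H$), which gives $\lceil h/(j_0\ell)\rceil\ge H/\ell+1$ and therefore $m_{j_0}\ge H+\ell>H$, so the guard $m_{j_0}>H$ fires and \texttt{CreateRunningBlock} initializes the block at head $H$; conversely $j_0\ge H_1/H$ gives $h\le j_0(H+\ell)$ and hence $m_{j_0}\le H+\ell$, so in fact $m_{j_0}=H+\ell$ and one \texttt{Shift} brings the fresh block to its target. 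I would also remark that, since the loop runs in \emph{descending} order of $j$, this largest new index $j_0=\lceil h/H\rceil-1$ is processed first, when the list $B$ holds exactly the blocks of index $0,\dots,j_0-1$; thus the \texttt{append} inside \texttt{CreateRunningBlock} deposits it at index $j_0$, as required.

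The main obstacle is precisely the claim that a single \texttt{Shift} per block per pass always suffices, so that no running block ever lags more than $\ell$ behind its prescribed position. This rests on the two elementary monotonicity estimates—$\lceil(H_1+\ell)/(j\ell)\rceil$ grows by at most $1$ because $1/j\le1$, and $\lceil(H_1+\ell)/H\rceil$ grows by at most $1$ because $\ell\le H$—together with the bookkeeping that a new block is created exactly when $m_{j_0}>H$ and is appended at the correct index under descending iteration. Once these are in hand, parts (a) and (b) of the invariant for the next pass follow at once, closing the induction.
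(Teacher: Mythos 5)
Your proof is correct and follows essentially the same route as the paper's: induction on the passes of the main loop, with the base case read off from the first pass, previously initialized blocks handled by the observation that $\lceil(H_1+\ell)/(j\ell)\rceil$ exceeds $\lceil H_1/(j\ell)\rceil$ by $0$ or $1$ (since $1/j\le 1$), and the freshly created block shown to land exactly at $H+\ell=\ell\lceil(H_1+\ell)/(j\ell)\rceil$. You even tie up a few details the paper leaves implicit --- deducing $\ell\mid H_1$ from the invariant so that $h=\min(H_1+\ell,N)=H_1+\ell$ without truncation, noting that at most one new block is created per pass, and checking that the descending iteration makes the append inside CreateRunningBlock deposit the new block at the correct index $j_0=\lceil H_1/H\rceil$.
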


\begin{proof}
At the start we have initialized only $B_1$ and $H_1=H$.
Then we put $h=H_1+\ell=H+\ell$. Since $1<\ell<H$\footnote{Since $\ell\mid H$, we 
have $\ell\le H$. If $\ell=H$, we have $\ell=H\ge\sqrt{LN}=\sqrt{2\ell N}$,
so that $\ell\ge2N$, contradicting $\ell\mid N$.}
we would have
$\lceil h/H\rceil=2$. The index $j$ in this first run will take only the value 
$j=1$. Since $B_1$ is initialized  we do not initialize any running block in this run. Since $H<\ell\lceil(H+\ell)/\ell\rceil=H+\ell$, the block
$B_1$ will be shifted and $H_1$ will be put equal to $H+\ell$.

Therefore, at the end of the first run of the loop we have $H_1=H+\ell$, so that 
$\lceil H_1/H\rceil=2$, and the only running block initialized is $B_1$, and 
$H_1=\ell\lceil H_1/\ell\rceil = H+\ell$. Therefore, the Proposition in true in this case.

Now by induction we may show that if our claim is true when a run starts, then it will 
be true at the end of this run.

At the start of a run we will have initialized the running block $B_j$ for
$1\le j<\lceil H_1/H\rceil$, and $H_j=\ell\lceil H_1/j\ell\rceil$. 

We will initialize in the next loop those $B_j$ not yet initialized, i.~e.~such 
that $\lceil H_1/H\rceil\le j$, and such that $\newH>H$, i.~e.~such that
$H< \ell\lceil(H_1+\ell)/j\ell\rceil$. Given  natural numbers $a$, $b$ and $c$, 
the relation  $a<\lceil b/c\rceil$ is 
equivalent to $ac<b$. Therefore, since $\ell\mid H$, the condition  
$H< \ell\lceil(H_1+\ell)/j\ell\rceil$ is equivalent to $Hj<H_1+\ell$ and this 
is equivalent to $j<\lceil(H_1+\ell)/H\rceil$.  Thus at the end of the loop 
all blocks $B_j$ with $1\le j<\lceil (H_1+\ell)/H\rceil$ will be initialized. 
Since the new $H_1$ will then be equal to $H_1+\ell$ we get half of our assertion. 

Now we have to show that at the end of the loop  $H_j$ will be equal to 
$H_j=\ell\lceil (H_1+\ell)/j\ell\rceil$.

For one of those blocks $B_j$ that were initialized at the start of the 
loop we have $H_j =\ell\lceil H_1/j\ell\rceil$. If we have
$H_j <\ell\lceil (H_1+\ell)/j\ell\rceil$ then this block is shifted in this
loop. If not, then we will have $H_j \ge\ell\lceil (H_1+\ell)/j\ell\rceil$.

In the first case $H_j/\ell=\lceil H_1/j\ell\rceil<\lceil (H_1+\ell)/j\ell\rceil$.
This is an instance of $\lceil x\rceil<\lceil x+y\rceil$ with $0<y\le 1$, so that
we will have $\lceil x+y\rceil=\lceil x\rceil+1$. Therefore, 
$\lceil (H_1+\ell)/j\ell\rceil=H_j/\ell +1$. It follows that 
$H_j+\ell = \ell\lceil (H_1+\ell)/j\ell\rceil$ and after the shift we will
have what we want. 

In the second case, the block has not been shifted, and 
$\ell\lceil (H_1+\ell)/j\ell\rceil\le H_j=\ell\lceil H_1/j\ell\rceil$. 
It follows that $H_j=\ell\lceil (H_1+\ell)/j\ell\rceil$. Again what we want. 

Now let $B_j$ be one of the running blocks that have just been initialized. So 
$j\ge \lceil H_1/H\rceil$ and $H <\ell\lceil (H_1+\ell)/j\ell\rceil$. After 
the initialization we have $H_j=H$, so that this block will be shifted. After
the shift $H_j$ is changed to $H+\ell$. Therefore, we must show that 
$H+\ell=\ell\lceil (H_1+\ell)/j\ell\rceil$.
First notice that $j\ge \lceil H_1/H\rceil$ is equivalent to 
$H\ge\ell\lceil H_1/j\ell\rceil$.  Therefore
\begin{displaymath}
\Bigl\lceil\frac{H_1}{j\ell}\Bigr\rceil\le \frac{H}{\ell} <\Bigl\lceil 
\frac{H_1}{j\ell}+\frac{1}{j}\Bigr\rceil.
\end{displaymath}
But since $0<1/j\le 1$ this implies 
\begin{displaymath}
\Bigl\lceil\frac{H_1}{j\ell}\Bigr\rceil=\frac{H}{\ell} < 
\Bigl\lceil\frac{H_1}{j\ell}+\frac{1}{j}\Bigr\rceil=\frac{H}{\ell}+1
\end{displaymath}
from which we get $H+\ell=\ell\lceil (H_1+\ell)/j\ell\rceil$.
\end{proof}

\subsection{Definition of the function CalculateRunningBlock.}\label{3:7}
After each application of the  function $\Shift(B,j)$, we must apply 
$\CalculateRunningBlock(j,\newH)$. This computes the improved values of the 
complexity for $H_j-\ell<n\le H_j$. Recall that the values of $B_j$ were initialized
to the upper bound $255$.

Calculating new values for $B_j$.
The steps are:
\begin{itemize}

\item[1.]{For each $2\le a\le b$, $H_j-\ell<ab\le H_j$:
\begin{itemize}
\item[a.]{Read $\Vert a\Vert$ from $B_0$. Read $\Vert b\Vert$ from $B_0$ if $b\le H$, otherwise from $B_{aj}$.}
\item[b.]{Set $\Vert a\Vert+\Vert b\Vert$ as the array value of $\Vert ab\Vert$ if it is lower than the current value.}
\end{itemize}
}
\item[2.]{For each $H_j-\ell<n\le H_j$ in ascending order:
\begin{itemize}
\item[a.]{Read $\Vert n-1\Vert+1$ from $B_j$ and set this as the array value of $\Vert n\Vert$ if it is lower than the current value.}
\item[b.]{Calculate $\kMax$ for $n$ using the current array value for $\Vert n\Vert$.}
\item[c.]{For each $6\le k\le kMax$:
\begin{itemize}
\item{Read $\Vert k\Vert$ from $B_0$ and $\Vert n-k\Vert$ from $B_j$. Set $\Vert n-k\Vert+\Vert k\Vert$ as the array value for $\Vert n\Vert$ if it is lower than the current value.}
\end{itemize}
}
\end{itemize}
}
\end{itemize}

\begin{python}[frame=none, xleftmargin = 0.5cm,
xrightmargin=0.5cm ]
<(*@\ref{3:7}@*) definition of the function CalculateRunningBlock>  (*@$\equiv$@*)
<(*@\ref{3:7:1}@*) definition of Products>
<(*@\ref{3:7:2}@*) definition of Sums>
def CalculateRunningBlock(j,newH):
    Products(j,newH)
    Sums(j,newH)
\end{python}
    
\subsubsection{Definition of Products.}\label{3:7:1} 
We consider all the products $ab$ where $2\le a\le b$, $H_j-\ell<ab\le H_j$.
We denote by $\Block$ the current running block, by $\BlockA$ and $\BlockB$ the 
blocks where $\Vert a\Vert$ and $\Vert b\Vert$, respectively, are to be found. 
At the same time we define $\shift$,  $\shiftA$ and $\shiftB$, in such a way that
$\Vert ab\Vert$ must be situated in $\Block\,[\,ab-\shift\,]$,
$\Vert a\Vert$ in $\BlockA\,[\,a-\shiftA\,]$ and 
$\Vert b\Vert$ in $\BlockA\,[\,b-\shiftB\,]$. 

\begin{python}[frame=none, xleftmargin = 0.5cm,
xrightmargin=0.5cm ]
<(*@\ref{3:7:1}@*) definition of Products>  (*@$\equiv$@*)
def Products(j,newH):
    Block = B[j]
    shift = newH-L+1
    BlockA = B[0]
    shiftA = 0
    a = 2
    b = max(1+(newH-step)/a, a)
    while (a*a <= newH) and (b >= a):
        if newH/a <= H:
            BlockB = BlockA
            shiftB = shiftA
        else:
            BlockB = B[a*j]
            shiftB = Heads[a*j]-L+1
        bmax = newH/a
        ab = a*b
        while b <= bmax:
            if Block[ab-shift] > BlockA[a-shiftA] + BlockB[b-shiftB]:
                Block[ab-shift] = BlockA[a-shiftA] + BlockB[b-shiftB]
            b = b+1
            ab = ab+a
        a = a+1
        b = max(1+(newH-step)/a, a)
\end{python} 

\begin{proposition}
The procedure $\Products(j,\newH) $ is correct, i.~e.~the needed complexities
$\Vert a\Vert $ and $\Vert b\Vert$ are contained in the indicated blocks.
\end{proposition}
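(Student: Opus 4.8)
The plan is to enumerate, for each admissible $a$, exactly which products $ab$ the two nested \texttt{while} loops traverse, and then to check the three array accesses one at a time. From the initialization $b=\max(1+\lfloor(\newH-\step)/a\rfloor,\,a)$ and the inner guard $b\le\lfloor\newH/a\rfloor$ one sees that every enumerated $b$ satisfies $(\newH-\ell)/a<b\le\newH/a$ together with $b\ge a$, so that the products run through $\newH-\ell<ab\le\newH$ with $2\le a\le b$. Because $L=2\ell$ and $\shift=\newH-L+1$, this range gives $0\le ab-\shift\le L-1$ at once, so $\Vert ab\Vert$ occupies a legal slot of $\Block=B_j$; this is the routine half of the proposition.

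Next I would dispose of the two easy reads. For $\Vert a\Vert$, the outer guard $a^2\le\newH\le N$ forces $a\le\sqrt N$, and the parameter inequality $H\ge\sqrt{LN}$ from \eqref{E:ineq} (with $L\ge2$) gives $\sqrt N<\sqrt{LN}\le H$; hence $1\le a\le H$ and $\BlockA[a]=B_0[a]=\Vert a\Vert$ is present. For $\Vert b\Vert$ in the branch $\lfloor\newH/a\rfloor\le H$, every enumerated $b$ obeys $2\le a\le b\le\lfloor\newH/a\rfloor\le H$, so it too is read correctly from $B_0$. The real content of the statement is the other branch, $\lfloor\newH/a\rfloor>H$, where $\Vert b\Vert$ is fetched from the running block $B_{aj}$ at offset $\shiftB=H_{aj}-L+1$; there I must show both that $b$ lies in the window $(H_{aj}-L,H_{aj}]$ of $B_{aj}$ and that $B_{aj}$ has already been brought to the head the code assumes.

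The device that makes the window check clean is that heads are multiplicative in the block index. Writing $\newH=H_j$ for the current head and $h$ for the value set in the main loop \ref{3:6}, Proposition \ref{P:6} (and the head updates already performed earlier in the current pass) give $H_j=\ell\lceil h/(j\ell)\rceil$ and $H_{aj}=\ell\lceil h/(aj\ell)\rceil$; the nested ceiling identity $\lceil\lceil y\rceil/a\rceil=\lceil y/a\rceil$ for positive integer $a$ then collapses these into $H_{aj}=\ell\lceil\newH/(a\ell)\rceil$. With this relation the two sides of the window follow by a short estimate: $b\le\lfloor\newH/a\rfloor\le\newH/a\le H_{aj}$ gives the top, while $b>(\newH-\ell)/a\ge\newH/a-\ell>H_{aj}-2\ell=H_{aj}-L$ (using $a\ge1$ and $H_{aj}<\newH/a+\ell$) gives the bottom, so $b-\shiftB\in\{0,\dots,L-1\}$.

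The main obstacle is the remaining scheduling point: that $B_{aj}$ is actually initialized and already shifted to head $H_{aj}$ when $\Products$ reads it. Here I would invoke that the \texttt{for} loop in \ref{3:6} runs $j$ in strictly decreasing order, so that $B_{aj}$ (with $a\ge2$, hence $aj>j$) is processed earlier in the same pass than $B_j$; it then suffices to place $aj$ inside the loop range, i.e.\ to prove $aj\le\lceil h/H\rceil-1$, equivalently $ajH<h$. This is exactly where the branch hypothesis is spent: $\lfloor\newH/a\rfloor>H$ gives $\newH>aH$, and dividing by $\ell$ (legitimate since $\ell\mid H$) turns $\ell\lceil h/(j\ell)\rceil>aH$ into $aH/\ell<\lceil h/(j\ell)\rceil$, whence $ajH<h$ by the integer relation ``$p<\lceil q/r\rceil\iff pr<q$'' already used in the proof of Proposition \ref{P:6}. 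The same inequality $\newH>aH$ also forces $H_{aj}\ge\newH/a>H$, so the creation step for $B_{aj}$ is genuinely triggered; thus $B_{aj}$ has been created and shifted to $H_{aj}=\ell\lceil h/(aj\ell)\rceil$ before it is needed, which is the value read through $\Heads[aj]$, and the verification closes.
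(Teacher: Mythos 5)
Your proof is correct, and it reaches the proposition by a genuinely different organization than the paper's. (i) Case decomposition: you split on the branch the code actually takes ($\lfloor\newH/a\rfloor\le H$ versus $>H$), while the paper splits on whether $B_{aj}$ is initialized and then argues that the code's test agrees with that status. Your direction maps directly onto the program: in the branch that reads $B_{aj}$ you deduce $\newH> aH$, hence $ajH<h$, hence $aj$ lies in the descending loop range, so $B_{aj}$ has been created (since $H_{aj}\ge\newH/a>H$) and shifted before $B_j$ is treated; the paper instead argues from the location of the first uninitialized index at $\lceil h/H\rceil$, and must separately check that ``initialized'' forces the code to choose $B_{aj}$ --- a step where the gap between real division ($\newH/a>H$) and the code's integer test is glossed over, a subtlety your arrangement avoids entirely. (ii) The window check: you collapse the heads with the nested-ceiling identity $\lceil\lceil y\rceil/a\rceil=\lceil y/a\rceil$, getting $H_{aj}=\ell\lceil\newH/(a\ell)\rceil$ and then $H_{aj}-L<b\le H_{aj}$ by soft estimates; the paper instead performs an explicit division-with-remainder computation ($h=j\ell p-r$, $p=aq-s$) that yields the sharper localization $H_{aj}-\ell\le(\newH-\ell)/a<b\le\newH/a\le H_{aj}$. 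Your containment, within $2\ell$ of the head rather than $\ell$, is weaker but entirely sufficient because the block length is $L=2\ell$, and the identity-based derivation is shorter and less error-prone. Two small remarks: your claim $\newH\le N$ deserves a word of justification (it follows from $\ell\mid N$ and $h\le N$), or can simply be weakened to the paper's $\newH\le N+\ell$, which still gives $a\le\sqrt{N+\ell}<\sqrt{LN}\le H$; and note that, exactly like the paper, what you invoke is really the inductive step of Proposition \ref{P:6} applied mid-run (heads of indices $>j$ already updated earlier in the current pass), not its literal statement, which speaks only about the start of a run.
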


\begin{proof}
1. Proof that $a\le H$ so that $\Vert a\Vert$ can always be read from the fixed block.

Since $a^2\le ab\le \newH$ we have $a\le \sqrt{\newH}$.  On the other hand
we have $\newH\le N+\ell$, because we only compute the complexities up to $N$. 
Then by \eqref{E:ineq} 
\begin{displaymath}
a\le \sqrt{\newH}\le  \sqrt{N+\ell}<\sqrt{N L}\le H.
\end{displaymath}
\medskip

2. Proof that $\frac{newH}{a}\le H$ or that $B_{aj}$ has been initialized. 

Assume that $B_{aj}$ is not initialized. 
Since we are shifting the block $B_j$ we are in  step $j$ of the 
Main Loop \ref{3:7}. The first not initialized running block will be 
$B_u$ with $u=\lceil h/H\rceil$, so that $j<u\le aj$, since $B_j$ is
initialized and $B_{aj}$ is  not. From $\lceil h/H\rceil\le a j$ we get
$h/H\le aj$, so that $h/j\le aH$.

Since $B_j$ has already been shifted if needed,
we have by Proposition \ref{P:6},  that $H_j=\newH=\ell\lceil h/j\ell\rceil$. 
Because $\ell\mid H$ it follows that
\begin{displaymath}
\newH=\ell\Bigl\lceil \frac{h}{j\ell}\Bigr\rceil\le 
\ell\Bigl\lceil aH\frac{1}{\ell}\Bigr\rceil = aH.
\end{displaymath}

Therefore, $b\le \frac{\newH}{a}\le H$. Hence if $B_{aj}$ is not initialized, 
then  $\frac{\newH}{a}\le H$ and we may
read $\Vert b\Vert$ from the fixed block $B_0$. But the program chooses $\BlockB=B_0$
when $ \frac{\newH}{a}\le H$.

\medskip

3. If $B_{aj}$ is initialized. By Proposition \ref{P:6}, we have in this 
case $aj<\lceil h/H\rceil$ and $\newH=\ell\lceil h/j\ell\rceil$. This implies
respectively that $aj<h/H$, and (since $\ell\mid\newH$) $h/j\ell\le \newH/\ell$.
It follows that
\begin{displaymath}
aH<\frac{h}{j}\le \newH\quad\text{so that}\quad H<\frac{\newH}{a}.
\end{displaymath}
Hence in this case the program puts $\BlockB =B_{aj}$. 
In fact we can read $\Vert b\Vert$ from $B_{aj}$ because
\begin{displaymath}
H_{aj}-\ell\le \frac{\newH-\ell}{a}<b\le \frac{\newH}{a}\le H_{aj}.
\end{displaymath}
The two intermediate inequalities  are true because we have by hypothesis
$\newH-\ell<ab\le \newH$. 

To prove the first inequality 
put $h=j\ell p-r$ with $0\le r<j\ell$ and $p=aq-s$ with $0\le s<a$. Then 
$h= aj\ell q -(sj\ell+r)$ and $0\le sj\ell+r <(s+1)j\ell\le aj\ell$, 
so that
\begin{multline*}
H_{aj}-\ell=\ell\Bigl(\Bigl\lceil\frac{h}{aj\ell}\Bigr\rceil-1\Bigr)=
\ell (q-1)=\ell\frac{aq-s-(a-s)}{a}\le\\ \le\frac{\ell}{a}(p-1)= \frac{\ell}{a}\Bigl(\Bigl\lceil\frac{h}{j\ell}\Bigr\rceil-1\Bigr)
=\frac{\newH-\ell}{a}.
\end{multline*}
With the same notation
\begin{displaymath}
\frac{\newH}{a}=\frac{\ell}{a}\Bigl\lceil\frac{h}{j\ell}\Bigr\rceil
=\frac{\ell}{a}p=\frac{\ell}{a}(aq-s)\le\ell q=\ell\Bigl\lceil\frac{h}{aj\ell}
\Bigr\rceil=H_{aj}.
\end{displaymath}

Hence if $B_{aj}$ is initialized then $\Vert b\Vert$ can be read from $\BlockB$ which
in this case will be $B_{aj}$, since in this case,  $H<\frac{\newH}{a}$.
\end{proof}     
  
\subsubsection{Definition of Sums.}\label{3:7:2}      
This is simpler than the case of the products.  The definition of 
the function $\kMaxfor(n,s)$ should be clear after our explanation in Section
\ref{2:7}. The definition of the function $E(\cdot)$ is contained in Section
\ref{2:3}.

As explained in Section \ref{2:8} we only have to test the values of $k\le \kMax$ which
are solid numbers. We precompute the first few of  these numbers and put them in 
a file in the form $\text{solid} = [\,1, 6, 8, 9, 12, \dots  \,]$.
If this file is not large enough an index-error  will appear. In practice this will
not be a 
problem  because $\kMax$ is relatively small and the sequence of solid
numbers is easy to compute.

\begin{python}[frame=none, xleftmargin = 0.5cm,
xrightmargin=0.5cm ]
<(*@\ref{3:7:2}@*) definition of Sums>  (*@$\equiv$@*)
from solid import *
def kMaxfor(n,s):
    target = s
    t = target/2
    while E(t)+E(target-t) < n:
        t = t-1
    return E(t)
def Sums(j,newH):
    for n in range(newH-step+1,newH+1):
        Block = B[j]
        shift = newH-L+1
        s = Block[n-1-shift]+1
        m = n
        kMax = kMaxfor(m,s)
        Block0 = B[0]
        r = 0
        b = solid[r]
        while b <= kMax:
            if Block[n-shift] > Block[n-b-shift]+Block0[b]:
                Block[n-shift] = Block[n-b-shift]+Block0[b]
            r = r+1
            b = solid[r]
\end{python}

Since we have taken $\ell>\kMax$ for all values of $n$ in $1\le n\le N$ 
it is clear that $\Vert n-k\Vert$ can  always be read from the current running 
block. Indeed, since the complexities $\Vert n\Vert$  for $\newH-\ell<n\le \newH$ are
computed in increasing order and the values in the first 
half of the running block (which 
has length $\ell$) are correct from the start.  Also the value 
of $\Vert k\Vert$ can always be read from the fixed block  since 
$k\le \kMax\le \ell\le H$.

\subsection{Some definitions}\label{3:9}
There are two simple functions in the main loop that we have not yet defined. 
The ceiling function is used  for certain ranges in the Main Loop.   To check 
whether a running block is initialized we use 
the length of the Heads list. Each time a running block is initialized
we put a new element in this list. 

\begin{python}[frame=none, xleftmargin = 0.5cm,
xrightmargin=0.5cm ]
<(*@\ref{3:7:2}@*) Some definitions>  (*@$\equiv$@*)
def ceiling(n,m):
	return n/m+(n
def notInitialized(j):
    return len(Heads) < j+1
\end{python}

\section{Upper bounds for $\Vert n\Vert$.}\label{S:UB}
To estimate the running time of our algorithms we need upper bounds for 
$\Vert n\Vert$. We get some useful upper bounds by expressing $n$ in a 
base $b$ and using Horner's algorithm.  For example
$n= r_0r_1r_2\dots r_k$  where $0\le r_j<b$ and $r_0\ge1$.  This is equivalent to 
\begin{equation}\label{basicform}
n=r_k+r_{k-1}b+r_{k-2}b^2+\cdots r_0 b^k=r_k+b(r_{k-1}+b(r_{k-2}+\cdots+b(r_1+ b r_0)\cdots)).
\end{equation}

For each digit $0\le r<b$ we write $D(b,r)$ for the complexity of 
multiplying by $b$ and adding $r$. E. g.  $D(6,4)\le 7$ because
$\Vert 6n+4\Vert=\Vert 3(2n+1)+1\Vert\le \Vert n\Vert+3+2+1+1=\Vert n\Vert+7$. 
In general we  define $D(b,r)$ as the least number  satisfying
\begin{equation}
\Vert r+bn\Vert\le \Vert n\Vert+D(b,r)
\end{equation}
for all $n\ge1$. 

We will always have $D(b,r)\le \Vert b\Vert+\Vert r\Vert$. 
This inequality may be strict. For example we have just seen that 
$D(6,4)\le 7$ and $\Vert 6\Vert+\Vert 4\Vert=9$.

\begin{proposition}
If the expansion of $n$ in base $b$ is given by $n= r_0r_1r_2\dots r_k$,
then 
\begin{equation}\label{boundbase}
\Vert n\Vert\le \Vert r_0\Vert+\sum_{j=1}^k D(b, r_j).
\end{equation}
\end{proposition}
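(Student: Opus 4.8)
The statement to prove is the inequality
\begin{equation*}
\Vert n\Vert\le \Vert r_0\Vert+\sum_{j=1}^k D(b, r_j),
\end{equation*}
where $n=r_0r_1r_2\dots r_k$ is the base-$b$ expansion with $r_0\ge 1$ and $0\le r_j<b$. The natural strategy is to exploit the Horner nesting in \eqref{basicform} directly: the key observation is that stripping off the last digit $r_k$ corresponds to passing from $n$ to the number $n'$ whose base-$b$ expansion is $r_0r_1\dots r_{k-1}$, since $n=r_k+b\,n'$.

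First I would set up the induction on $k$. For the base case $k=0$, we have $n=r_0$ with $1\le r_0<b$, and the empty sum on the right is $0$, so the claimed bound reads $\Vert n\Vert\le\Vert r_0\Vert$, which holds with equality since $n=r_0$. For the inductive step, assume the bound holds for all numbers with a $k$-digit expansion, and let $n$ have the $(k{+}1)$-digit expansion $r_0r_1\dots r_k$. Writing $n'=r_0r_1\dots r_{k-1}$, I would use the identity $n=r_k+b\,n'$ together with the defining property of $D(b,r)$, namely that $\Vert r+bm\Vert\le\Vert m\Vert+D(b,r)$ for all $m\ge1$. Applying this with $r=r_k$ and $m=n'$ gives
\begin{equation*}
\Vert n\Vert=\Vert r_k+b\,n'\Vert\le \Vert n'\Vert + D(b,r_k).
\end{equation*}
Then by the induction hypothesis applied to the $k$-digit number $n'=r_0r_1\dots r_{k-1}$,
\begin{equation*}
\Vert n'\Vert\le \Vert r_0\Vert+\sum_{j=1}^{k-1}D(b,r_j),
\end{equation*}
and combining these two inequalities yields exactly $\Vert n\Vert\le\Vert r_0\Vert+\sum_{j=1}^{k}D(b,r_j)$, completing the step.

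The only point requiring a little care is the hypothesis $m\ge 1$ in the defining inequality for $D(b,r)$: I must ensure $n'\ge 1$ whenever I invoke it. This is guaranteed because $n'=r_0r_1\dots r_{k-1}$ has leading digit $r_0\ge 1$, so $n'\ge r_0 b^{k-1}\ge 1$. I do not anticipate any genuine obstacle here; the argument is a routine induction, and the substance is entirely captured by the Horner decomposition \eqref{basicform} and the definition of $D(b,r)$. The mild subtlety worth flagging is simply that the recursion peels digits from the \emph{least significant} end (the last digit $r_k$), which is consistent with the indexing in \eqref{basicform} where $r_0$ is the most significant digit and hence the one that survives untouched into the $\Vert r_0\Vert$ term.
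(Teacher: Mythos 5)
Your proof is correct and follows essentially the same route as the paper: induction on the number of digits, peeling off the least significant digit via $n = r_k + b\,n'$ and applying the defining property of $D(b,r_k)$ together with the induction hypothesis for $n'$. The only difference is cosmetic (the paper writes $m$ for your $n'$ and treats the base case and the positivity of $m$ implicitly, whereas you spell both out).
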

\begin{proof}
For numbers with one digit this is trivially true. In the other case we have
$n = r_k+mb$ with $m=r_0r_1r_2\dots r_{k-1}$ so that by  induction  we get
\begin{displaymath}
\Vert n\Vert\le \Vert m\Vert+D(b,r_k)=\Vert r_0\Vert+\sum_{j=1}^{k-1} D(b,r_j)+ D(b,r_k).
\end{displaymath}
\end{proof}
 
By means of the following proposition we may easily obtain upper bounds for 
$D(b,r)$. $\phantom{0}$

\begin{proposition}
Let $d\mid b$  with $1<d<b$, where $b=da$.   For $1\le r\le b$ put 
$r=qd+s$. Then 
\begin{equation}
D(b,r)=D(d a, qd+s)\le D(d,s)+D(a,q).
\end{equation}
\end{proposition}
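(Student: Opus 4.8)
The plan is to reduce the evaluation of $\Vert r+bn\Vert$ to two successive applications of the defining inequality for $D$, one in base $d$ and one in base $a$. First I would record the algebraic identity that makes the decomposition work. Writing $b=da$ and $r=qd+s$, a direct computation gives
$$r+bn = qd+s+dan = d(an+q)+s,$$
so reading off the digit $r$ in base $b$ splits into first reading off the digit $q$ in base $a$ (producing the intermediate number $an+q$) and then reading off the digit $s$ in base $d$.

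With this identity in hand the argument is a two-step chaining. Setting $m:=an+q$, which is a positive integer for every $n\ge1$, the definition of $D(d,s)$ applied to $m$ yields $\Vert d\,m+s\Vert\le\Vert m\Vert+D(d,s)$, that is, $\Vert r+bn\Vert\le\Vert an+q\Vert+D(d,s)$. Next, the definition of $D(a,q)$ applied to $n$ gives $\Vert an+q\Vert\le\Vert n\Vert+D(a,q)$. Combining the two bounds produces
$$\Vert r+bn\Vert \le \Vert n\Vert + D(d,s)+D(a,q)$$
for all $n\ge1$. Since $D(b,r)$ is by definition the least constant $C$ for which $\Vert r+bn\Vert\le\Vert n\Vert+C$ holds for every $n\ge1$, the displayed inequality forces $D(b,r)\le D(d,s)+D(a,q)$, which is exactly the assertion.

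The argument is essentially a single observation, so there is no deep obstacle; the only points needing care are bookkeeping. I would check that every argument fed into a defining inequality is a genuine positive integer: $m=an+q\ge1$ because $a,n\ge1$ and $q\ge0$, and $n\ge1$ by hypothesis. The one mildly degenerate case is $q=0$, occurring when $r=s<d$; here I would remark that $D(a,0)$ is still well defined, since $\Vert an\Vert\le\Vert n\Vert+\Vert a\Vert$ shows the set of admissible constants is nonempty, and the chaining goes through unchanged with $an+q=an$.
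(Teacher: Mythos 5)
Your proof is correct and follows essentially the same route as the paper's: both rest on the identity $nb+r=(na+q)d+s$ and then chain the defining inequalities for $D(d,s)$ and $D(a,q)$ to bound $\Vert nb+r\Vert\le\Vert n\Vert+D(d,s)+D(a,q)$, whence minimality of $D(b,r)$ gives the claim. Your extra remarks (positivity of $an+q$, well-definedness of $D(a,0)$ when $q=0$) are sound bookkeeping that the paper leaves implicit.
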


\begin{proof}
We have
\begin{displaymath}
\Vert n b+r\Vert=\Vert (n a +q)d+s\Vert=D(d,s)+\Vert na+q\Vert\le 
D(d,s)+D(a,q)+\Vert n\Vert.
\end{displaymath}
\end{proof}

We  define a function 
$D_0(b,r)$ for $b\ge2$ and $0\le r<b$ by induction on $b\ge2$.

First when $b=p$ is prime
\begin{equation}
D_0(p,r):=\Vert p\Vert+\Vert r\Vert
\end{equation}
where we take $\Vert 0\Vert :=0$, and in general
\begin{equation}
D_0(b,r):=\min\bigl\{\Vert b\Vert+\Vert r\Vert, \min_{1<d<b, d\mid b}D_0(d,\bmod(r,d))
+D_0(b/d,\lfloor r/d\rfloor)\bigr\}.
\end{equation}

By induction we will then find that
\begin{equation}
D(b,r)\le D_0(b,r).
\end{equation}

\begin{remark}
The best bounds are usually obtained with bases of the form $b=2^n 3^m$. Notice that 
to compute $D(b,r)$ we need to precompute  only the values of $D(b',r)$ for 
all proper divisors $b'\mid b$. 
\end{remark}

\subsection{Average bound of the complexities.}
We will give here an application to the general theory of the complexity 
of natural numbers. 
We may define several constants associated with  the bounds on the complexity. 
\begin{definition}
Let $C_{\text{max}}$ denote the $\limsup_{n\to\infty}\frac{\Vert n\Vert}{\log n}$. Let 
$C_{\text{avg}}$ be
the infimum of all $C$ such that 
\[ \Vert n\Vert \le C\log n\]
for a set of natural numbers  of density $1$.
\end{definition}
We have 
\begin{displaymath}
\frac{3}{\log 3}\le C_{\text{avg}}\le C_{\text{max}}\le \frac{3}{\log 2}.
\end{displaymath}

In \cite{G} it is said that Isbell has shown using the expression of $n$ in basis 
24 that $C_{\text{avg}}\le 3.475$.  In \cite{S} Steinerberger considered also a related problem 
obtaining for a slightly different constant  the bound
$C'_{\text{avg}}\le 3.332$.

\begin{proposition}
For any basis $b\ge 2$ we have
\begin{equation}\label{averagebound}
C_{\text{avg}}\le \frac{1}{b\log b}\sum_{r=0}^{b-1}  D(b,r).
\end{equation}
In particular we get 
\[C_{\text{avg}}=\frac{41\,747\,875}{2^{7}3^8\log(2^{9}3^8)}=
3.3080772123153688960\dots\dots  \le  3.309.\] 
\end{proposition}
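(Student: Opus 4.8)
The plan is to combine the deterministic digit bound \eqref{boundbase} with the equidistribution of base-$b$ digits over a density-one set of integers. Write the base-$b$ expansion of $n$ as $n=r_0r_1\dots r_k$ with $b^k\le n<b^{k+1}$, and for each digit value $0\le r<b$ let $N_r(n)$ be the number of positions $1\le j\le k$ with $r_j=r$. Regrouping the sum in \eqref{boundbase} by digit value gives
\begin{displaymath}
\Vert n\Vert\le \Vert r_0\Vert+\sum_{j=1}^k D(b,r_j)=\Vert r_0\Vert+\sum_{r=0}^{b-1}N_r(n)\,D(b,r),
\end{displaymath}
so everything reduces to controlling the frequencies $N_r(n)$ and the length $k$ for almost all $n$.

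First I would establish the equidistribution input. Fix $\epsilon>0$. Among the $(b-1)b^m$ integers with exactly $m+1$ base-$b$ digits, the positions $r_1,\dots,r_m$ run over $\{0,\dots,b-1\}^m$, so if $n$ is drawn uniformly from this block each $N_r$ is binomial with mean $m/b$ and variance at most $m$. Chebyshev's inequality then bounds the proportion of $(m+1)$-digit $n$ with $|N_r(n)-m/b|>\epsilon m$ by $1/(\epsilon^2 m)$; a union bound over the $b$ residues gives $b/(\epsilon^2 m)\to0$. Summing these per-block estimates over digit-length blocks up to $M$ (whose counts are negligible except for those with $\asymp \log M/\log b$ digits, where the bad fraction is small) shows that
\begin{displaymath}
A_\epsilon=\Bigl\{n:\ |N_r(n)-k/b|\le \epsilon k\ \text{for all }0\le r<b\Bigr\}
\end{displaymath}
has density one.

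Next, for $n\in A_\epsilon$ I would feed these frequencies back into the displayed bound. Since $\Vert r_0\Vert$ is at most the constant $M_b:=\max_{1\le r<b}\Vert r\Vert$, each $D(b,r)\ge0$, and $k\log b\le \log n$, we obtain
\begin{displaymath}
\Vert n\Vert\le M_b+\Bigl(\frac{k}{b}+\epsilon k\Bigr)\sum_{r=0}^{b-1}D(b,r)\le M_b+\frac{\log n}{\log b}\Bigl(\frac{1}{b}+\epsilon\Bigr)\sum_{r=0}^{b-1}D(b,r).
\end{displaymath}
Dividing by $\log n$ and letting $n\to\infty$ inside $A_\epsilon$, the constant term vanishes and $\Vert n\Vert\le (C_b+\epsilon')\log n$ on a density-one set, where $C_b=\tfrac{1}{b\log b}\sum_r D(b,r)$ and $\epsilon'\to0$ with $\epsilon$. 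As $\epsilon$ is arbitrary, the definition of $C_{\text{avg}}$ yields $C_{\text{avg}}\le C_b$, which is \eqref{averagebound}. For the explicit value I would then specialize to $b=2^9 3^8$ and evaluate the sum through $D(b,r)\le D_0(b,r)$, computing $D_0$ by its recursion over the divisors $2^i3^j\mid b$; this finite computation gives $\sum_{r=0}^{b-1}D(b,r)=166\,991\,500=2^2\cdot 41\,747\,875$, whence
\begin{displaymath}
C_b=\frac{166\,991\,500}{2^9 3^8\log(2^9 3^8)}=\frac{41\,747\,875}{2^7 3^8\log(2^9 3^8)}=3.3080772\ldots\le 3.309.
\end{displaymath}

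The hard part will be the density-one bookkeeping in the equidistribution step: making the Chebyshev concentration uniform over all $b$ residues at once, and gluing the per-block frequency estimates into a genuine density statement for $\{1,\dots,M\}$ while absorbing the leading digit $\Vert r_0\Vert$ and the lower-order digit-length blocks, whose total contribution to the count is negligible. Once that is in place, the explicit constant is a large but entirely routine machine computation trusting the $D_0$ recursion.
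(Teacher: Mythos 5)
Your proof is correct and follows essentially the same route as the paper: the digit bound \eqref{boundbase}, equidistribution of base-$b$ digits over a density-one set of integers, and the numerical specialization to $b=2^9 3^8$ via the $D_0$ recursion. The only difference is that you prove the concentration step by hand with Chebyshev's inequality and explicit block-by-block bookkeeping where the paper simply invokes Chernoff's theorem, so your argument is a correct (and more detailed) elaboration of what the paper leaves terse.
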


\begin{proof}

Given a number in base $b$ as $n=r_0r_1\dots r_k$ with $r_0\ne 0$, then 
$n\ge b^k$ so that by \eqref{boundbase} we have
\[\frac{\Vert n\Vert}{\log n}\le \frac{\Vert r_0\Vert}{k\log b}+\frac{1}{k\log b}
\sum_{j=1}^k D(b,r_j).\]
The first term tends to $0$ as $n$ tends to $\infty$. By Chernoff's Theorem,   for almost all numbers 
the difference
\[\frac{1}{k}
\sum_{j=1}^k D(b,r_j)-\frac{1}{b}\sum_{r=0}^{b-1}D(b,r)\]
is small. 
This proves \eqref{averagebound}.

We may easily compute the numbers $\frac{1}{b}\sum_{r=0}^{b-1}  D(b,r)$, for increasing 
bases. The smaller values are obtained for basis of the form $b=2^n3^m$.  Having 
computed all 
numbers $D(b,r)$ for all  bases $2^n 3^m <3359232=2^9 3^8$, we get the best value 
for $b=2^9 3^8$ for which
\[\frac{1}{b\log b}\sum_{r=0}^{b-1}  D(b,r)= 3.3080772123153688960\dots.\]
\end{proof}

\section{Performance of the time-improved algorithm.}\label{S:PF}

\begin{theorem}
The time-improved algorithm presented in Section \ref{S:2}  computes all $\Vert n\Vert$ for $ n\le N$ in time $\Orden(N^{1.230175})$ 
and space $\Orden(N\log\log N)$.
\end{theorem}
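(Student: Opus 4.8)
The plan is to bound separately the time and space of the main loop in Section~\ref{S:2}, since the algorithm performs, for each $n$, three clearly delineated pieces of work: computing $\kMax$ (Section~\ref{2:7}), testing the sums (Section~\ref{2:8}), and testing the products (Section~\ref{2:9}). I would first dispose of the space bound, which is the easier half: the only large object stored is the list $\Compl$ of length $\nMax+1$, and each entry $\Compl[n]$ is a complexity $\Vert n\Vert\le \frac{3}{\log 2}\log n$, hence representable in $\Orden(\log\log n)$ bits. Summing over $n\le N$ gives the stated $\Orden(N\log\log N)$ space, modulo the transient $\Orden(\log N)$ space needed inside each call to $E$ (Proposition in Section~\ref{2:3}), which is negligible.

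For the time bound I would treat the three contributions additively. The cost of testing the products, summed over all $n$, is $\sum_{n\le N}\#\{k:2\le k\le \min(n,N/n)\}$, which is essentially $\sum_{n\le N} N/n = \Orden(N\log N)$, and is therefore dominated by the other terms. The cost of computing $\kMax$ for each $n$ is governed by the while-loop in Section~\ref{2:7}, whose number of iterations is $\Orden(\Vert n-1\Vert)=\Orden(\log n)$, and each iteration calls $E$ at cost $\Orden(\log n)$ by the Proposition in Section~\ref{2:3}; this gives $\Orden(N\log^2 N)$ overall, again lower order. The dominant term is the sum-testing loop: for each $n$ it runs over $6\le m\le \kMax$, so the total time is $\Orden\bigl(\sum_{n\le N}\kMax(n)\bigr)$.

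The main obstacle is thus to show $\sum_{n\le N}\kMax(n)=\Orden(N^{\alpha})$ with $\alpha=1.230175$. The crude bound $\kMax\le 2n^\beta$ from Corollary~\ref{kmaxbound} only yields $\sum_{n\le N}\kMax=\Orden(N^{1+\beta})$ with $1+\beta\approx 1.585$, which is far weaker than claimed; this gap is precisely the improvement over \cite{SS} advertised in the Introduction. The point is that $\kMax(n)=E(t_e)$ is small for \emph{most} $n$, and one must exploit the finer bound \eqref{kMaxIneq1} from Proposition~\ref{kMaxbound}, namely
\begin{displaymath}
\kMax\le \frac{n}{2}\Bigl(1-\sqrt{1-\tfrac{4}{n^2}3^{\Vert n-1\Vert/3}}\Bigr)
\approx \frac{3^{\Vert n-1\Vert/3}}{n},
\end{displaymath}
together with the \emph{average} behaviour of $\Vert n-1\Vert$ rather than its worst case. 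So the key step is to split the range $n\le N$ according to the value of $\Vert n-1\Vert$, estimate the number of $n\le N$ with $\Vert n-1\Vert$ near a given level $c\log_2 n$ using the upper-bound machinery of Section~\ref{S:UB} (in particular the base-expansion bound \eqref{boundbase} and the constants $C_{\text{avg}}$, $C_{\text{max}}$), and balance the resulting exponents.

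I would carry this out by writing $\sum_{n\le N}\kMax(n)\ll \sum_{n\le N} n^{-1}3^{\Vert n-1\Vert/3}$ and partitioning the sum over dyadic or level sets of $\Vert n-1\Vert$. For each level, the contribution is the product of a counting factor (how many $n$ achieve a given complexity, controlled by Section~\ref{S:UB}) and the weight $n^{-1}3^{\Vert n-1\Vert/3}$; one then optimizes over the level. Since $3^{\Vert n\Vert/3}$ grows like a power of $n$ while the count of $n$ with large complexity is correspondingly sparse, the optimization produces a single exponent $\alpha$ strictly below $1+\beta$. I expect the precise value $\alpha=1.230175$ to emerge from this balancing as the solution of a transcendental equation involving $\log 2$, $\log 3$, and the density estimates for the complexity function, and verifying that this numerical constant is exactly what the balance yields — rather than merely bounding it — will be the delicate quantitative part of the argument.
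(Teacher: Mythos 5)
Your decomposition of the cost is exactly right and matches the paper: the space bound, the $\Orden(N\log N)$ for products, the $\Orden(N\log^2 N)$ for computing $\kMax$, and the reduction of the dominant sum-testing cost to $\sum_{n\le N}\kMax(n)\ll\sum_{n\le N} n^{-1}3^{\Vert n-1\Vert/3}$ via Proposition \ref{kMaxbound} are all carried out the same way in the paper. The gap is in the one step that actually produces the exponent $\alpha=1.230175$. You propose to partition $n\le N$ into level sets of $\Vert n-1\Vert$, count each level set, and optimize; but neither Section \ref{S:UB} nor anything else in the paper supplies counting estimates for level sets of the complexity function. The constants $C_{\text{avg}}$ and $C_{\text{max}}$ are far too weak for this: a density-one statement tells you the set where $\Vert n\Vert\ge c\log n$ has density zero, but the weight $n^{-1}3^{\Vert n-1\Vert/3}$ is concentrated precisely on that sparse exceptional set, so you would need a quantitative large-deviation bound on $\#\{n\le N:\Vert n\Vert\ge c\log n\}$ for every $c$, which is not available here. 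Your plan as written therefore cannot be completed with the paper's tools.

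The paper sidesteps level sets entirely by bounding the exponential moment directly through the digit structure. Fix a base $b$ and write $n$ in base $b$ with $\ell$ digits; by \eqref{boundbase}, $3^{\Vert n\Vert/3}\le C_b\prod_{j=1}^{\ell}3^{D(b,r_j)/3}$, so summing over \emph{all} $\ell$-digit numbers factors coordinatewise:
\begin{equation*}
\sum_{b^{\ell-1}\le n<b^\ell}3^{\Vert n\Vert/3}\le C_b\Bigl(\sum_{d=0}^{b-1}3^{D(b,d)/3}\Bigr)^{\ell}=C_bA_b^{\ell},
\end{equation*}
and then $\sum_{n\le N}n^{-1}3^{\Vert n-1\Vert/3}\ll\sum_{\ell}(A_b/b)^{\ell}=\Orden(N^{\alpha})$ with $\alpha=-1+\log A_b/\log b$. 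In particular, the numerical value $1.230175$ is not the root of a transcendental balancing equation in $\log 2$ and $\log 3$: it is the explicit quantity $-1+\frac{1}{\log b}\log\bigl(\sum_{d=0}^{b-1}3^{D(b,d)/3}\bigr)$ evaluated at $b=2^{10}3^7$, found by computing the full table of $D(b,d)$ (via the recursion for $D_0$) for all bases $2^n3^m\le 3\,188\,246$ and taking the best one. Your level-set idea could in principle be rescued by applying a Chernoff bound to the digit sums $\sum_j D(b,r_j)$ and Legendre-transforming back — this is the standard duality between exponential-moment bounds and large deviations, and the paper itself uses Chernoff in this way for the $C_{\text{avg}}$ bound — but that is just a longer route to the same factorization, and without introducing the quantities $D(b,d)$ and the base-$b$ factorization you have no mechanism for obtaining the claimed exponent.
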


\begin{proof}
Clearly we need space for the array $\Compl$. At the start of the algorithm 
we initialize this array by a common value $\cMax$ of the order 
$\log N$ (observe that to simplify the notation we have put $N=\nMax$), and we need 
$\log\log N$ bits to store this value. During the calculation each element 
of the array is decreased, so that  we need $\Orden(N\log\log N)$ bits of space. 
The space needed for the rest of the computation is only $\Orden(\log N)$, needed to 
store some numbers  $\le N$ or to compute $E(t)$ for $t$ of the order of $\log N$. 

The test for the products takes a number of operations of the order
\begin{displaymath}
\sum_{n=1}^N \min(n,N/n)=\sum_{n=1}^{\sqrt{N}}n+\sum_{n=\sqrt{N}}^N \frac{N}{n}=
\Orden(N)+\Orden(N\log N)=\Orden(N\log N).
\end{displaymath}

The cost of computing $\kMax$ for a value of $n$ is $\Orden(\log^2n)$ operations.
In fact the starting value of $k$ is of the order $\Vert n-1\Vert$, so of order
$\log n$. In the procedure $k$ will be changed at most $k$ times and for each of these
we have to compute two values of $E$ for numbers of size $\log n$ 
at a cost of $\Orden(\log n)$ operations and $\Orden(\log n)$ space.

In the main loop for each $n\le \text{nMax}$ we have precomputed $\Vert n-1\Vert$. We  compute $\kMax$ for such an $n$ and  then in the check of sums we run the variable $m$  
from $6$ to $\kMax$ requiring a fixed number of operations for each value of $m$.  The cost of all this is of the order of
\begin{displaymath}
C=\sum_{n=1}^N (\log^2n +\kMax(n))=\Orden(N\log^2N)+\sum_{n=1}^N n
\Bigl(1-\sqrt{1-\frac{4}{n^2}3^{\Vert n-1\Vert/3}}\Bigr).
\end{displaymath}
Since
\begin{displaymath}
4\frac{3^{\Vert n-1\Vert/3}}{n^2}\le 4
\frac{\exp\left(\frac{\log3}{3}\frac{3}{\log2}\log n
\right)}{n^2}\le  4 \,n^{-0.415037}
\end{displaymath}
tends to $0$ when $n\to\infty$ we may bound the cost by
\begin{displaymath}
C\le \Orden(N\log^2N)+\Orden\Bigl(\sum_{n=1}^N \frac{3^{\Vert n-1\Vert/3}}
{n}\Bigr).
\end{displaymath}
Now choose a base $b$ and apply the bound \eqref{boundbase}.  Taking a number
$a$ such that $b^{a-1}\le N< b^a$  we will have
\begin{displaymath}
\Orden\Bigl(\sum_{n=1}^N \frac{3^{\Vert n-1\Vert/3}}
{n}\Bigr)\le 
\Orden\Bigl(\sum_{\ell=1}^{a}\sum_{b^{\ell-1}\le n< b^\ell}
\frac{3^{\Vert n\Vert/3}}
{n}\Bigr)=\Orden_b\Bigl(\sum_{\ell=1}^{a}b^{-\ell}\sum_{b^{\ell-1}\le n< b^\ell}
3^{\Vert n\Vert/3}\Bigr).
\end{displaymath}
Now in the inner sum $n$ runs through all the numbers that in base $b$ have $\ell$ digits. 
By \eqref{boundbase} we will have
\begin{displaymath}
\sum_{b^{\ell-1}\le n< b^\ell}
3^{\Vert n\Vert/3}\le C_b\sum_{b^{\ell-1}\le n< b^\ell}
 3^{\frac13\sum_{j=1}^\ell D(b,b_j)}\le C_b 
 \Bigl(\sum_{d=0}^{b-1}3^{\frac13D(b,d)}\Bigr)^\ell
=C_b A_b^\ell.
\end{displaymath}

Hence
\begin{displaymath}
C\le \Orden(N\log^2N)+
\Orden_b\Bigl(\sum_{\ell=1}^{a}(A_b/b)^\ell\Bigr)=\Orden(N\log ^2N)+
\Orden_b((A_b/b)^a).
\end{displaymath}
Since $N\sim b^a$ with a constant only depending on $b$
\begin{displaymath}
(A_b/b)^a=\exp\bigl(a\log(A_b/b)\bigr)=b^{a\frac{\log(A_b/b)}{\log b}}=\Orden(N^\alpha),\end{displaymath}
where
\begin{displaymath}
\alpha=\frac{\log(A_b/b)}{\log b}=-1+\frac{1}{\log b}\log\Bigl(\sum_{d=0}^{b-1}3^{\frac13D(b,d)}\Bigr).
\end{displaymath}
We have computed $\alpha$  for all bases $b=2^n 3^m\le 3\,188\,246$.
For $b=2\,239\,488=2^{10}3^7$ we found the smallest value 
\begin{multline*}
\alpha=\frac{\log(3^6 2^{-10}(30\,357\,189 + 21\,079\,056\cdot 3^{1/3} + 14\,571\,397 \cdot 3^{2/3})}{\log (2^{10}3^7)}=\\1.230\,174\,997\,215\,298\,061\,586\,\dots
< 1.230175.\end{multline*}
\end{proof}

\section{Performance of Fuller's algorithm.}

\begin{proposition}
The space-improved algorithm presented  in Section \ref{S:F} 
computes $\Vert n\Vert$ for all $ n\le N$ in time $\Orden(N^\alpha)$ using 
$\Orden(N^{(1+\beta)/2}\log\log N)$ bits of storage. 
( $\alpha=1.230175$ and $(1+\beta)/2)\approx0.792481$. )
\end{proposition}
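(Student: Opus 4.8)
The plan is to analyze separately the time and space costs of Fuller's algorithm, reusing the time analysis of Section~\ref{S:PF} while carrying out a fresh accounting of the space.

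For the time bound, the key observation is that Fuller's algorithm performs essentially the same arithmetic work as the time-improved algorithm of Section~\ref{S:2}: the same product tests $\Vert a\Vert+\Vert b\Vert$ for $ab\le N$ and the same sum tests $\Vert k\Vert+\Vert n-k\Vert$ for solid $k\le\kMax$. The only structural difference is that large divisors $n/j$ are now recomputed inside their own running blocks $B_j$ rather than stored once. First I would argue that each complexity $\Vert n\Vert$ for $n\le N$ is computed exactly once in the main running block $B_1$, and that the work spent in the auxiliary blocks $B_j$ merely re-derives values of $\Vert n/j\Vert$ that the Section~\ref{S:2} algorithm would have stored. Summing the product and sum costs over all $n\le N$ reproduces exactly the sums $\sum_{n\le N}\min(n,N/n)=\Orden(N\log N)$ and $\sum_{n\le N}(\log^2 n+\kMax(n))$ already estimated in the proof of the previous theorem, which gave $\Orden(N^\alpha)$. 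The extra recomputation in the auxiliary blocks contributes at most a constant factor, since block $B_j$ is shifted $\Orden(N/(j\ell))$ times and recomputes $\Orden(\ell)$ complexities per shift, and $\sum_j N/j$ is dominated by the product-test total. I therefore expect the time bound $\Orden(N^\alpha)$ to follow with modest additional bookkeeping.

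For the space bound, the plan is to count the total number of bytes occupied by the fixed block $B_0$ of length $H$ together with all initialized running blocks $B_j$, each of length $L=2\ell$. By Proposition~\ref{P:6} a running block $B_j$ is initialized precisely when $j<\lceil N/H\rceil$, so the number of running blocks in use is $\Orden(N/H)$, and their combined length is $\Orden(L\cdot N/H)=\Orden(\ell N/H)$. Adding the fixed block gives a total of $\Orden(H+\ell N/H)$ array entries, each of which stores a complexity value requiring $\Orden(\log\log N)$ bits. To minimize $H+\ell N/H$ I would balance the two terms, which the constraints~\eqref{E:ineq} already anticipate: the requirement $H\ge\sqrt{LN}=\sqrt{2\ell N}$ together with $\kMax<\ell$ (so that $\ell$ may be taken of order $\kMax\le 2N^\beta$ by Corollary~\ref{kmaxbound}) forces $H$ of order $\sqrt{\ell N}\approx N^{(1+\beta)/2}$. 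With this choice both $H$ and $\ell N/H$ are of order $N^{(1+\beta)/2}$, yielding a total of $\Orden(N^{(1+\beta)/2})$ array entries and hence $\Orden(N^{(1+\beta)/2}\log\log N)$ bits.

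The main obstacle I anticipate is justifying that the space accounting is genuinely dominated by the blocks and that no hidden storage grows faster. In particular I must verify that the \texttt{Heads} list, the \texttt{solid} table, and the scratch variables used in \texttt{Products} and \texttt{Sums} all occupy only $\Orden(N/H)$ or lower-order space, and that the optimal choice $\ell\asymp N^\beta$, $H\asymp N^{(1+\beta)/2}$ is simultaneously compatible with every constraint in~\eqref{E:ineq}; the exponent arithmetic $(1+\beta)/2\approx0.792481$ must be checked against $\ell\mid H$, $\ell\mid N$, and $N\le\ell H$. The delicate point is that $\ell$ must exceed $\kMax$ for \emph{all} $n\le N$, so the lower bound $\ell>2N^\beta$ from Corollary~\ref{kmaxbound} is exactly what pins down the exponent, and I would present this balancing as the crux of the argument.
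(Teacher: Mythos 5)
Your space analysis is sound and essentially identical to the paper's: count the $\Orden(N/H)$ initialized running blocks of length $L$ plus the fixed block of length $H$, charge $\Orden(\log\log N)$ bits per entry, and balance $H\asymp\sqrt{\ell N}$ against $\ell\asymp N^\beta$, the latter forced by Corollary~\ref{kmaxbound}; the divisibility caveats you raise are dispatched in the paper by noting that $H$ and $N$ may be enlarged slightly.

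The gap is in your time analysis, in the sentence claiming the auxiliary blocks contribute ``at most a constant factor'' because $B_j$ is shifted $\Orden(N/(j\ell))$ times, recomputes $\Orden(\ell)$ complexities per shift, and $\sum_j N/j$ is dominated by the product-test total. That argument counts the number of \emph{entries} recomputed, not the \emph{work}: recomputing the complexity of a single $n$ in block $B_j$ costs up to $\Orden(\kMax(n))=\Orden(n^\beta)$ operations for the sum tests alone, not $\Orden(1)$, so the auxiliary-block work is not bounded by $C\sum_j N/j=\Orden(N\log(N/H))$. No entry count can close this, since the sum tests are exactly what makes even the main block cost $N^\alpha$ rather than $N$. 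The step you are missing, which is the crux of the paper's proof, is that the sum tests in block $B_j$ cost what the Section~\ref{S:2} algorithm costs when run up to $N/j$, namely $C(N/j)^\alpha$, whence
\begin{displaymath}
\sum_{j\le N/H}C\Bigl(\frac{N}{j}\Bigr)^\alpha\le C\zeta(\alpha)N^\alpha=\Orden(N^\alpha),
\end{displaymath}
the convergence of $\sum_j j^{-\alpha}$ (i.e.\ the fact that $\alpha>1$) being precisely what makes all the recomputation only a constant factor more expensive than the main block. The products across all blocks are bounded separately by $\sum_{j\le N/H}\sum_{a\le\sqrt{N/j}}N/(aj)=\Orden(N\log^2N)$, and the shift/initialization overhead is $\Orden(N\log(N/H))$ --- this last term is where your entry count actually belongs. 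Without the $\zeta(\alpha)$ step your proof does not establish the $\Orden(N^\alpha)$ bound.
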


\begin{proof}
In the last run of the Main Loop we have $H_1\le N+\ell$ so that the total number of 
initialized running blocks will be $\le \lceil(N+\ell)/H\rceil\le  N/H+2$. 

Hence we need a fixed block of length $H$ and $\approx N/H$ running blocks of 
length $L$.  
Each entry in the blocks must contain a value of the complexity, 
each of $\log\log N$ bytes. So the required space will be
\begin{displaymath}
\le \Bigl(H+\frac{N}{H}L\Bigr)\log\log N.
\end{displaymath}

The value of $L$ is limited by our conditions $\kMax\le \ell$ and $L=2\ell$. Therefore, 
an $L$ of the order  $\Orden(N^\beta)$ or larger would be adequate by Corollary 
\ref{kmaxbound}. 
Given $N$ and $L$ the best choice of $H$ (the one requiring less space) 
is  $H=\sqrt{LN}$, and this will give a  space requirement of $\sqrt{NL}\log\log N$.  
So the best choice will be to take 
$L=\Orden(N^\beta)$,  and $H=\Orden(N^{(1+\beta)/2})$.  It is easy to see that 
increasing $H$ and $N$ a little (if needed) we may also satisfy the conditions 
$\ell\mid H$ and $\ell\mid N$.

So the space requirement for the algorithm is $\Orden(N^{(1+\beta)/2}\log\log N)$, 
and this choice will satisfy all the conditions in \eqref{E:ineq}.

The time and the number of operations  needed for the computation is 
as follows

1. Computing the fixed block takes $\Orden(H^\alpha)$ operations. 

2. Copying from $B_0$ into $B_1$ takes $\Orden(L)$ operations.

The running block $B_j$ starts at $H$ and ends at $\approx N/j$ in steps of size 
$\ell$. Hence $B_j$ must be adapted about $N/j\ell$ times.  

There are $N/H$ running blocks  each of length $L$. Initializing all these requires \begin{displaymath}
\le C \sum_{j\le N/H} \frac{N}{j}=\Orden(N\log N/H)\qquad \text{operations}.
\end{displaymath} 

To compute the new values for the block $B_j$ (for the products) requires a fixed 
number of operations for each $2\le a\le b$ with $H-\ell<ab\le N/j$. 
The total cost of  the products is therefore
\begin{displaymath}
\le C \sum_{j\le N/H} \sum_{a\le \sqrt{N/j}} \frac{N}{aj}\le C \sum_{j\le N/H}\frac{N}{j}
\log\sqrt{N/j}\le CN\log N\log (N/H) \le \Orden(N\log^2N).
\end{displaymath}

To compute the new values for the block $B_j$ (for the sums) we have to perform 
the same number of operations as in the time-improved algorithm of Section \ref{S:2}. This cost was $\Orden(N^\alpha)$. Hence the cost for the sums in the block $B_j$ is 
at most   $C (N/j)^\alpha$.  It follows that all the sums for the block cost 
\begin{displaymath}
\le C \sum_{j\le N/H} \Bigl(\frac{N}{j}\Bigr)^\alpha\le C\zeta(\alpha) N^\alpha=
\Orden(N^\alpha).
\end{displaymath}
So the total cost of the algorithm is 
\begin{displaymath}
\Orden(N^\alpha)+\Orden(N\log^2N)+\Orden(N\log(N/H))=\Orden(N^\alpha).
\end{displaymath}
\end{proof}

\section{Acknowledgement}
We thank  H. Altman for pointing out the possible application of our bounds
in terms of $D(b,r)$ to improve upon existing bounds of $C_\text{avg}$.

\end{document}